\documentclass [11pt] {article}
\usepackage{amsfonts}
\usepackage{amssymb}
\usepackage{times}
\usepackage{graphicx}

\setlength{\topmargin}{-.5in}
\setlength{\textheight}{9in}
\setlength{\oddsidemargin}{5pt}
\setlength{\textwidth}{6.25in}

\newtheorem{lemma}{Lemma}[section]
\newtheorem{corollary}{Corollary}[section]
\newtheorem{theorem}{Theorem}

\newtheorem{proposition}{Proposition}[section]
\newtheorem{definition}{Definition}[section]

\def\supp{{\rm supp}}

\def\<{\langle}
\def\>{\rangle}

\def\to{\rightarrow}

\def\bX{{\bf X}}

\def\bA{{\bf A}}

\begin{document}

\title{Limits of functions on groups}
\author{{\sc Bal\'azs Szegedy}}

\maketitle

\abstract{Our goal is to develop a limit approach for a class of problems in additive combinatorics that is analogous to the limit theory of dense graph sequences. We introduce metric, convergence and limit objects for functions on groups and for measurable functions on compact abelian groups. As an application we find exact minimizers for densities of linear configurations of complexity $1$.}

\section{Introduction}

The so-called graph limit theory (see \cite{LSz}, \cite{LSz2}, \cite{BCLSV}, \cite{L}) gives an analytic approach to a large class of problems in graph theory. A very active field of applications is extremal graph theory where, roughly speaking, the goal is to find the maximal (or minimal) possible value of a graph parameter in a given family of graphs and to study the structure of graphs attaining the extremal value. A classical example is Tur\'an's theorem which implies that a triangle free graph $H$ on $2n$ vertices maximizes the number of edges if $H$ is the complete bipartite graph with equal color classes. Another example is given by the Chung-Graham-Wilson theorem \cite{CGW}. If we wish to minimize the density of the four cycles in a graph $H$ with edge density $1/2$ then $H$ has to be sufficiently quasi random. However the perfect minimum of the problem (that is $1/16$) can not be attained by any finite graph but one can get arbitrarily close to it. Such problems justify graph limit theory where in an appropriate completion of the set of graphs the optimum can always be attained if the extremal problem satisfies a certain continuity property. Furthermore one can use variational principles at the exact maximum or minimum bringing the tools of differential calculus into graph theory.

Extremal graph (and hypergraph) theory has a close connection to additive combinatorics. It is well known that the triangle removal lemma by Szemer\'edi and Ruzsa implies Roth's theorem on three term arithmetic progressions. The proof relies on an encoding of an integer sequence (or a subset in an abelian group) by a graph that is rather similar to a Cayley graph. Such representations of additive problems in graph theory hint at a limit theory for subsets in abelian groups that is closely connected to graph limit theory. This new limit theory, that is actually a limit theory for functions on abelian groups, was initiated by the author in \cite{Sz2} \cite{Sz3} and \cite{Sz} in a rather general form. It turns out that there is a hiararchy of limit notions corresponding to $k$-th order Fourier analysis where the limit notion gets finer as $k$ is increasing and the limit objects get more complicated. The focus of this paper is the linear case $k=1$ that was called ``harmonic analytic limit'' in \cite{Sz2}. This case is interesting on its own right, covers numerous important questions and is illustrative for the more general limit concept. 

We introduce metric, convergence and limit objects for subsets in abelian groups. More generally, since subsets can be represented by their characteristic functions, we study the convergence of functions on abelian groups. This extends the range of possible applications of our approach to problems outside additive combinatorics. 

In the first part of the paper we study a metric $\hat{d}$ and related convergence notion for $l^2$ functions on discrete (not necessarily commutative) groups. It is important that the metric $\hat{d}$ allows us to compare two functions defined on different groups. In chapter \ref{chapcomplim} we introduce a distance $d$ for measurable functions $f\in L^2(A_1),g\in L^2(A_2)$ defined on compact ablelian groups $A_1,A_2$ such that $d(f,g):=\hat{d}(\hat{f},\hat{g})$ where $\hat{f}$ and $\hat{g}$ denote the Fourier transforms of $f$ and $g$. In additive combinatorics, we can use the distance $d$ to compare subsets in finite abelian groups in the following way. If $S_1\subseteq A_1$ and $S_2\subseteq A_2$ are subsets in finite abelian groups $A_1$ and $A_2$ then their distance is $d(1_{S_1},1_{S_2})$. This allows us to talk about convergent sequences of subsets in a sequence of abelian groups. 

A crucial property of the metric $d$ (see theorem \ref{convcl}) is that it puts a compact topology on the set of all pairs $(f,A)$ where $A$ is a compact abelian group and $f$ is a measurable function on $A$ with values in a fixed compact convex set $K\subset\mathbb{C}$. As a consequence we have that any sequence of subsets $\{S_i\subseteq A_i\}_{i=1}^\infty$ in finite abelian groups $A_i$ has a convergent sub-sequence with limit object which is a measurable function of the form $f:A\rightarrow [0,1]$ where $A$ is some compact abelian group. This result is analogous to graph limit theory where graph sequences always have convergent subsequences with limit object which is a symmetric measurable function of the form $W:[0,1]^2\rightarrow [0,1]$. 

The success of a limit theory depends on how many interesting parameters are continuous with respect to the convergence notion. The parameters that are most interesting in additive combinatorics are densities of linear configurations. A linear configuration is given by a finite set of linear forms i.e. homogeneous linear multivariate polynomials over $\mathbb{Z}$. For example a $3$ term arithmetic progression is given by the linear forms $a,~a+b,~a+2b$. If $f$ is a bounded measurable function on a compact abelian group $A$ then we can compute the density of $3$-term arithmetic progressions in $f$ as the expected value $\mathbb{E}_{a,b\in A}(f(a)f(a+b)f(a+2b))$ according to the normalized Haar measure on $A$. This density concept can be generalized to an arbitrary linear configuration $\mathcal{L}=\{L_1,L_2,\dots,L_k\}$ and the density of $\mathcal{L}$ in $f$ is denoted by $t(\mathcal{L},f)$ (see formula (\ref{confdens}) and the following sentence.). 
Gowers and Wolf introduced a complexity notion \cite{GW} for linear configurations called {\it true complexity} (see definition \ref{truecomp} in this paper). A useful upper bound for the true complexity is the so-called Cauchy-Schwarz complexity developed by Green and Tao in \cite{GT}. 

We prove the following fact (for precise formulation see theorem \ref{denscont}).

\medskip

\noindent {\bf Theorem:}~~{\it If $\mathcal{L}$ ha true complexity at most $1$ then the density function of $\mathcal{L}$ is continuous in the metric $d$.}

\medskip

Examples for linear configurations of complexity $1$ include the $3$-term arithmetic progression \cite{GT}, the parallelogram $a,~a+b,~a+c,~a+b+c$,  and the system $\mathcal{L}_H:=\{x_i+x_j~:~(i,j)\in E(H)\}$ where $H$ is an arbitrary finite graph on $\{1,2,\dots,n\}$.  The last example gives a close connection with graph limit theory. The density of $\mathcal{L}_H$ in $f\in L^\infty(A)$ is equal to the density of the graph $H$ in the symmetric kernel $W:A\times A\rightarrow\mathbb{C}$ defined by $W(x,y)=f(x+y)$. Note that if $f$ has values in $[0,1]$ then $W$ is a graphon in the graph limit language. We will elaborate on this connection in chapter \ref{graphlim}

Let $\mathcal{L}$ be an arbitrary linear configuration. For $0\leq\delta\leq 1$ and $n\in\mathbb{N}$ let $\rho(\delta,n,\mathcal{L})$ denote the minimal possible density of $\mathcal{L}$ in subsets of $\mathbb{Z}_n$ of size at least $\delta n$. Let $\rho(\delta,\mathcal{L}):=\liminf_{p\to\infty}\rho(\delta,p,\mathcal{L})$ where $p$ runs through the prime numbers. A result by Candela and Sisask implies that the $\liminf$ can be relaced by $\lim$ in the definition of $\rho(\delta,\mathcal{L})$. Note that Roth's theorem is equivalent with the fact that $\rho(\delta,\mathcal{L})>0$ if $\delta>0$ and $\mathcal{L}=\{a,~a+b,~a+2b\}$. 

\medskip


\begin{theorem}\label{rothext} Let $\mathcal{L}$ be a linear configuration of true complexity at most $1$. For every $0\leq \delta\leq 1$ we have that $$\rho(\delta,\mathcal{L})=\min_f(t(\mathcal{L},f))$$ where $f$ runs through all measurable functions of the form $f:A\rightarrow [0,1]$ with $\mathbb{E}(f)=\delta$ on compact abelian groups $A$ with torsion-free dual groups. 
\end{theorem}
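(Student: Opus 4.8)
The plan is to prove the two inequalities $\rho(\delta,\mathcal{L})\le \min_f t(\mathcal{L},f)$ and $\rho(\delta,\mathcal{L})\ge \min_f t(\mathcal{L},f)$ separately, using the compactness of the metric $d$ (theorem \ref{convcl}) and the continuity of the density function for complexity-$1$ configurations (theorem \ref{denscont}) as the two main inputs. First I would establish that the minimum on the right-hand side is actually attained: the set of pairs $(f,A)$ with $f:A\to[0,1]$ and $\mathbb{E}(f)=\delta$ is, by theorem \ref{convcl}, contained in a compact space, and the constraint $\mathbb{E}(f)=\delta$ as well as the torsion-free-dual condition should be shown to be closed under $d$-convergence (the functional $\mathbb{E}(f)=\hat f(0)$ is visibly $\hat d$-continuous, and I would argue that a $d$-limit of functions on groups with torsion-free duals again has torsion-free dual, since the relevant dual groups are the ones generated by the frequencies appearing in the limit). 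Since $t(\mathcal{L},\cdot)$ is $d$-continuous by theorem \ref{denscont}, it attains its infimum on this compact set; call a minimizer $f_0:A_0\to[0,1]$.

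For the inequality $\rho(\delta,\mathcal{L})\le t(\mathcal{L},f_0)$, I would take the minimizer $f_0$ and approximate it by subsets of $\mathbb{Z}_p$. The idea is to discretize: because $A_0$ has torsion-free dual, $\mathbb{Z}_p$ (for large primes $p$) can be used to ``sample'' $f_0$ in a way that converges to $f_0$ in $d$ — one builds a subset $S_p\subseteq\mathbb{Z}_p$ of size $\lceil\delta p\rceil$ whose indicator function is $d$-close to $f_0$. Here the torsion-freeness of the dual is exactly what allows the finite cyclic groups to approximate $A_0$ (frequencies in $\widehat{A_0}$ can be matched, with controlled error, by characters of $\mathbb{Z}_p$ via simultaneous Diophantine approximation). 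Then $d$-continuity of $t(\mathcal{L},\cdot)$ gives $t(\mathcal{L},1_{S_p})\to t(\mathcal{L},f_0)$, and $t(\mathcal{L},1_{S_p})\ge\rho(\delta,p,\mathcal{L})$ up to the negligible rounding of $\delta p$, so passing to the limit along primes yields $\rho(\delta,\mathcal{L})\le t(\mathcal{L},f_0)$.

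For the reverse inequality $\rho(\delta,\mathcal{L})\ge \min_f t(\mathcal{L},f)$, I would take, for each prime $p$, a near-optimal subset $S_p\subseteq\mathbb{Z}_p$ with $|S_p|\ge\delta p$ and $t(\mathcal{L},1_{S_p})$ within $o(1)$ of $\rho(\delta,p,\mathcal{L})$. By compactness (theorem \ref{convcl}) the sequence $\{1_{S_p}\}$ has a $d$-convergent subsequence with limit $f_\infty:A_\infty\to[0,1]$; one checks $\mathbb{E}(f_\infty)\ge\delta$ (lower semicontinuity of $\mathbb{E}$ would suffice, but in fact $\mathbb{E}$ is continuous so $\mathbb{E}(f_\infty)=\lim|S_p|/p\ge\delta$), and that $A_\infty$ has torsion-free dual because the $\mathbb{Z}_p$ are arranged along increasing primes so that no fixed torsion appears in the limiting dual. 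If $\mathbb{E}(f_\infty)>\delta$ one can pass to $\delta f_\infty/\mathbb{E}(f_\infty)$ or truncate to restore the constraint without increasing the density (monotonicity of $t(\mathcal{L},\cdot)$ in $f$ for nonnegative $f$, which holds since all the linear forms are applied to a nonnegative function). Then $d$-continuity gives $t(\mathcal{L},f_\infty)=\lim t(\mathcal{L},1_{S_p})=\rho(\delta,\mathcal{L})$, so the minimum over $f$ is at most $\rho(\delta,\mathcal{L})$.

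The main obstacle I anticipate is the careful handling of the torsion-free-dual hypothesis in both directions: in the ``$\le$'' direction one must show that an arbitrary $f_0$ on a group with torsion-free dual is genuinely approximable in $d$ by indicators on cyclic groups $\mathbb{Z}_p$ of prescribed density (this is where the Diophantine approximation of the finitely many relevant frequencies, together with an equidistribution argument and the continuity of $t$, must be made precise), and in the ``$\ge$'' direction one must verify that the $d$-limit of the $1_{S_p}$ really lands in the torsion-free-dual class rather than acquiring spurious torsion. Controlling the density constraint $\mathbb{E}(f)=\delta$ exactly (rather than approximately) under these limiting operations, including the rounding of $\delta p$ to an integer and the possible renormalization/truncation step, is the other point that needs care; everything else is a fairly direct application of theorems \ref{convcl} and \ref{denscont} together with the Candela--Sisask result allowing $\liminf$ to be replaced by $\lim$.
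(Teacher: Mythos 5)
Your two-inequality architecture matches the paper's, and your lower-bound direction ($\rho(\delta,\mathcal{L})\geq\min_f t(\mathcal{L},f)$) is essentially the paper's own argument: near-extremal sets $S_p\subseteq\mathbb{Z}_p$, a $d$-convergent subsequence via theorem \ref{convcl}, continuity via theorem \ref{denscont}, and torsion-freeness of the limiting dual from the ultraproduct description of the limit (corollary \ref{charconv3}); your monotonicity remark even handles the density normalization that the paper leaves implicit. The gap is in the upper-bound direction. There the entire content of the theorem is the approximation statement you defer to ``Diophantine approximation and equidistribution'': that every measurable $f_0:A_0\rightarrow[0,1]$ on a compact group with torsion-free dual is a $d$-limit of indicator functions $1_{S_p}$ on $\mathbb{Z}_p$ (the paper's proposition \ref{visszahuz}). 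Your sketch supplies at most half of this. Matching finitely many frequencies of $\widehat{A_0}$ by characters of $\mathbb{Z}_p$ would at best produce a $[0,1]$-valued function on $\mathbb{Z}_p$ that is $d$-close to $f_0$; it does not produce a subset. The missing idea is why a $[0,1]$-valued function can be replaced by a $\{0,1\}$-valued one at negligible cost in $d$. This is false for most metrics and is exactly where the structure of $d$ enters: the paper rounds independently at random at each point (lemma \ref{discretize}), shows via Chernoff that the rounding error has small $U_2$ norm, hence small $\ell^\infty$ Fourier transform by lemma \ref{u2eq}, and such perturbations are invisible to $\hat{d}$. Without this step (or an equivalent one) your sets $S_p$ are not constructed and $\rho(\delta,\mathcal{L})\leq t(\mathcal{L},f_0)$ is unproved.

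A secondary point: the paper's route to the frequency matching avoids Diophantine approximation altogether. The ultraproduct $\prod_\omega\mathbb{Z}_{p_i}$ over increasing primes is, as an abelian group, a $\mathbb{Q}$-vector space, so the countable torsion-free group $\widehat{A_0}$ embeds into $\prod_\omega\widehat{\mathbb{Z}_{p_i}}$ exactly; pulling $f_0$ back along the induced homomorphism realizes it as an ultralimit of $[0,1]$-valued functions on the $\mathbb{Z}_{p_i}$, to which the rounding lemma is then applied. Your direct approach could likely be made to work, but you would need to check that the finitely many multiplicative relations demanded by a partial isomorphism of weight $\lceil 1/\epsilon\rceil$ are preserved by your approximate character matching, which is precisely the part the sketch does not address.
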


\medskip

We emphasize that in theorem \ref{rothext} we obtain $\rho(\delta,\mathcal{L})$ as an actual minimum and thus there is some function $f_{\delta,\mathcal{L}}$ realizing the value $\rho(\delta)$. If for example $\mathcal{L}=\{a,~a+b,~a+2b\}$ then it is easy to deduce Roth's theorem by using Lebesgue density theorem for a sufficiently precise approximation of $f_{\delta,\mathcal{L}}$ by its projection to a large enough finite dimensional factor group of $A$. One gets that $f_{\delta,\mathcal{L}}$ has positive $3$-term arithmetic progression density if $\delta>0$ and thus $\rho(\delta)>0$ holds. It would be very interesting to find the explicit form of a minimizer $f_{\delta,\mathcal{L}}$ for every $\delta$ or even to obtain any information on $f_{\delta,\mathcal{L}}$ like on which abelian group it is defined? 

\medskip

It is important to mention that our convergence notion behaves quite differently from usual convergence notions in functional analysis. There is an example for a convergent sequence of functions, all of them defined on the circle (complex unit circle with multiplication or equivalently the quotient group $\mathbb{R}/\mathbb{Z}$), but the limit object exists only on the torus.

\medskip

In the proofs we will extensively use ultra limit methods. Ultralimt methods in graph and hypergraph regularization and limit theory were first introduced in \cite{ESz}. There are two different reasons to use these methods. One is that they seem to help to get rid of a great deal of technical difficulties and provide cleaner proofs for most of our statements. The other reason is that they point to an interesting connection between ergodic theory and our limit theory. The ultra product $\bA$ of compact abelian groups $\{A_i\}_{i=1}^\infty$ behaves as a measure preserving system. Our limit concept can easily be explained through a factor $\mathcal{F}(\bA)$ of $\bA$ which is a variant of the so called Kronecker factor.

\section{A limit notion for functions on discrete groups}\label{disclim}

For an arbitrary group $G$ we denote by $l^2(G)$ the Hilbert space of all functions $f:G\rightarrow\mathbb{C}$ such that $\|f\|_2^2=\sum_{g\in G}|f(g)|^2\leq\infty$.
If $f\in l^2(G)$ and $\epsilon\geq 0$ then we denote by $\supp_\epsilon(f)$ the set $\{g:g\in G,|f(g)|> \epsilon\}|$ In particular $\supp(f):=\supp_0(f)$ is the support of $f$. Not that if $\epsilon>0$ then $|\supp_\epsilon(f)|\leq\|f\|_2^2/\epsilon^2$ and thus $\supp(f)$ is a countable (potentially finite) set. We denote by $\langle f\rangle$ the subgroup of $G$ generated by $\supp(f)$. It is clear that $\langle f\rangle$ is a countable (potentially finite) group.

Two functions $f_1\in l^2(G_1)$ and $f_2\in l^2(G_2)$ are called isomorphic if there is a group isomorphism $\alpha:\langle f_1\rangle\rightarrow \langle f_2\rangle$ such that $f_1=f_2\circ\alpha$. Let us denote by $\mathcal{M}$ the isomorphism classes of $l^2$ functions on groups. Our goal is to define a metric space structure on $\mathcal{M}$.  We will need the next group theoretic notion.

\begin{definition} Let $G_1$ and $G_2$ be groups. A partial isomorphism of weight $n$ is a bijection $\phi:S_1\rightarrow S_2$ between two subsets $S_1\subseteq G_1,S_2\subseteq G_2$ such that  $g_1^{\alpha_1}g_2^{\alpha_2}\dots g_n^{\alpha_n}=1$ holds if and only if $\phi(g_1)^{\alpha_1}\phi(g_2)^{\alpha_2}\dots \phi(g_n)^{\alpha_n}=1$ for every sequence $g_i\in S_1,\alpha_i\in\{-1,0,1\}$ with $1\leq i\leq n$.
\end{definition}

\begin{definition} Let $f_1\in l^2(G_1)$ and $f_2\in l^2(G_2)$. An $\epsilon$-isomorphism between $f_1$ and $f_2$ is a partial isomorphism $\phi:S_1\rightarrow S_2$ of weight $\lceil 1/\epsilon\rceil$ between sets with $\supp_\epsilon(f_1)\subseteq S_1\subseteq G_1$ and $\supp_\epsilon(f_2)\subseteq S_2\subseteq G_2$ such that $|f_1(g)-f_2(\phi(g))|\leq\epsilon$ holds for every $g\in S_1$. We define $\hat{d}(f_1,f_2)$ as the infimum of all $\epsilon$'s such that there is an $\epsilon$-isomorphism between $f_1$ and $f_2$.
\end{definition}

\begin{proposition} The function $\hat{d}$ is a metric on $\mathcal{M}$.
\end{proposition}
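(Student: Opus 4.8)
The plan is to verify the three metric axioms for $\hat d$ on $\mathcal M$: that $\hat d$ is well-defined on isomorphism classes, symmetric, satisfies $\hat d(f_1,f_2)=0$ iff $f_1\cong f_2$, and obeys the triangle inequality. Well-definedness and symmetry are essentially immediate: an $\e$-isomorphism is a partial isomorphism together with an approximation condition that is manifestly symmetric in the two functions (swap $\phi$ for $\phi^{-1}$, noting that a partial isomorphism of weight $n$ has an inverse of weight $n$), and composing with an honest isomorphism of $\langle f_i\rangle$ carries $\e$-isomorphisms to $\e$-isomorphisms, so the infimum defining $\hat d$ depends only on isomorphism classes. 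Finiteness ($\hat d\le 1$ always, since for $\e$ close to $1$ one can take $S_1=S_2=\emptyset$ once $\supp_\e(f_i)=\emptyset$, which happens when $\e\ge\max\|f_i\|_\infty$; more carefully one checks $\hat d\le 1$ using that $\supp_1(f)=\emptyset$ is not quite right, so instead observe that for any $\e>\sup|f_1|,\sup|f_2|$ the empty bijection works and the weight condition is vacuous) should be noted in passing.

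The substantive point is $\hat d(f_1,f_2)=0\Rightarrow f_1\cong f_2$ and the triangle inequality. For the first: suppose for every $\e>0$ there is an $\e$-isomorphism $\phi_\e:S_1^\e\to S_2^\e$. First I would argue that $\langle f_1\rangle$ and $\langle f_2\rangle$ are countable (as noted in the excerpt), enumerate $\supp(f_1)=\{g_1,g_2,\dots\}$, and use a diagonal/compactness argument: for a sequence $\e_m\to0$, the maps $\phi_{\e_m}$ restricted to any fixed finite subset of $\supp(f_1)$ eventually send each $g_j$ (which lies in $\supp_{\e_m}(f_1)$ once $\e_m<|f_1(g_j)|$) to some element of $G_2$ with $|f_2(\phi_{\e_m}(g_j))-f_1(g_j)|\le\e_m$; in particular $|f_2(\phi_{\e_m}(g_j))|$ is bounded below, so $\phi_{\e_m}(g_j)$ ranges over the finite set $\supp_{c_j}(f_2)$ for suitable $c_j>0$. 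Hence by König's lemma / a diagonal subsequence we can extract a single limiting assignment $\alpha:g_j\mapsto h_j$. The weight condition with $n=\lceil1/\e_m\rceil\to\infty$ forces $\alpha$ to preserve \emph{all} relations $g_{i_1}^{\pm1}\cdots g_{i_k}^{\pm1}=1$, so $\alpha$ extends to a group isomorphism $\langle f_1\rangle\to\langle f_2\rangle$; and $|f_1(g_j)-f_2(h_j)|\le\e_m\to0$ gives $f_1=f_2\circ\alpha$. The converse ($f_1\cong f_2\Rightarrow\hat d=0$) is trivial since an isomorphism restricted to $\supp_\e$ is an $\e$-isomorphism for every $\e$.

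For the triangle inequality, given an $\e_1$-isomorphism $\phi:S_1\to S_2$ between $f_1,f_2$ and an $\e_2$-isomorphism $\psi:T_2\to T_3$ between $f_2,f_3$, I would take the composite $\psi\circ\phi$ restricted to $\phi^{-1}(S_2\cap T_2)$. The key checks are: this composite is defined on a set containing $\supp_{\e_1+\e_2}(f_1)$ (because $g\in\supp_{\e_1+\e_2}(f_1)$ forces $|f_1(g)|>\e_1+\e_2\ge\e_1$ so $g\in S_1$, and then $|f_2(\phi(g))|\ge|f_1(g)|-\e_1>\e_2$ so $\phi(g)\in\supp_{\e_2}(f_2)\subseteq T_2$); its image contains $\supp_{\e_1+\e_2}(f_3)$ by the symmetric argument; the triangle inequality in $\mathbb C$ gives $|f_1(g)-f_3(\psi\phi(g))|\le\e_1+\e_2$; and finally the weight of the composite partial isomorphism is at least $\min(\lceil1/\e_1\rceil,\lceil1/\e_2\rceil)\ge\lceil1/(\e_1+\e_2)\rceil$, so it is an $(\e_1+\e_2)$-isomorphism. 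Taking infima over $\e_1,\e_2$ yields $\hat d(f_1,f_3)\le\hat d(f_1,f_2)+\hat d(f_2,f_3)$.

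I expect the main obstacle to be the $\hat d=0$ case, specifically making the extraction of a limiting isomorphism rigorous: one must be careful that for each fixed $g_j$ the possible images under $\phi_{\e_m}$ lie in a \emph{finite} set (this uses $|\supp_\e(f_2)|\le\|f_2\|_2^2/\e^2$), so that a diagonal argument applies, and that the growing weight genuinely captures arbitrary words — one should spell out that any relation among finitely many generators involves only finitely many letters, hence is tested once $m$ is large enough. The other axioms are routine bookkeeping with the triangle inequality in $\mathbb C$ and with the definition of weight, and I would present them briefly.
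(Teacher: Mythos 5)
Your overall strategy coincides with the paper's: a diagonal extraction from a sequence of $\e_m$-isomorphisms to handle the case $\hat d(f_1,f_2)=0$, and composition of an $\e_1$- with an $\e_2$-isomorphism, with the weight estimate $\min(\lceil 1/\e_1\rceil,\lceil 1/\e_2\rceil)\ge\lceil 1/(\e_1+\e_2)\rceil$, for the triangle inequality. The triangle-inequality part of your argument is complete and correct (your choice of domain $\phi^{-1}(S_2\cap T_2)$ is the maximal one; the paper restricts to a slightly smaller set, but the verifications are identical).

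There is, however, one genuine gap in the $\hat d(f_1,f_2)=0\Rightarrow f_1\cong f_2$ step: surjectivity. Your diagonal argument produces a value-preserving, relation-preserving injection $\alpha:\supp(f_1)\to\supp(f_2)$, and hence an \emph{injective} homomorphism $\langle f_1\rangle\to\langle f_2\rangle$; but the definition of isomorphism in $\mathcal M$ requires a genuine group isomorphism $\langle f_1\rangle\to\langle f_2\rangle$ with $f_1=f_2\circ\alpha$, and nothing in your argument forces $\alpha(\supp(f_1))$ to exhaust $\supp(f_2)$. (Running the argument in both directions gives two injections, but there is no Cantor--Schr\"oder--Bernstein for groups, and even for the identity $f_1=f_2\circ\alpha$ on all of $\langle f_1\rangle$ you need $\alpha$ to map the complement of $\supp(f_1)$ into the complement of $\supp(f_2)$.) The paper closes this by first normalizing so that $\|f_2\|_2\le\|f_1\|_2$ and then observing that, since $\alpha$ is injective and value-preserving on supports, $\|f_1\|_2^2=\sum_{g\in\supp(f_1)}|f_2(\alpha(g))|^2\le\|f_2\|_2^2\le\|f_1\|_2^2$, so equality holds throughout and every element of $\supp(f_2)$ lies in the image of $\alpha$. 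You need to add this (or an equivalent) counting argument; everything else in your write-up is routine and matches the paper.
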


\begin{proof} First we show that $\hat{d}(f_1,f_2)=0$ if and only if $f_1$ and $f_2$ are isomorphic. If $f_1$ is isomorphic to $f_2$ then it is clear that $d(f_1,f_2)=0$. For the other direction assume w.l.o.g. that $\|f_2\|_2\leq \|f_1\|_2$. Let $\alpha_n:S_{1,n}\rightarrow S_{2,n}$ be an $1/n$-isomorphism between $f_1$ to $f_2$ for every $n$.  Clearly, for every element $g\in\supp(f_1)$ there are finitely many possible elements in the sequence $\{\alpha_n(g)\}_{n=1}^\infty$ since $\lim_{n\rightarrow\infty}f_2(\alpha_n(g))=f_1(g)$ and there are finitely many elements $h$ in $G_2$ on which $f_2(h)>f_1(g)/2$.
Using that the support of $f_1$ is countable we obtain that there is a subsequence $\{\beta_n\}$ of $\{\alpha_n\}$ such that the sequences $\{\beta_n(g)\}$ stabilize (become constant) after finitely many steps for every $g$ with $f_1(g)>0$. This defines a map $\beta=\lim\beta_n$ from $\supp(f_1)$ to $\supp(f_2)$. It is clear that $\beta$ extend to an injective homomorphism from $\langle f_1\rangle$ to $\langle f_2\rangle$ and it satisfies $f_2(\beta(g))=f_1(g)$ for every $g\in\langle f_1\rangle$. Using $\|f_2\|_2\leq\|f_1\|_2$ it follows that every element in $\supp(f_2)$ is in the image of $\beta$ and so $\beta$ is a value preserving isomorphism between $\langle f_1\rangle$ and $\langle f_2\rangle$.

It remains to check the triangle inequality for the metric $d$. Assume that $\alpha:S_1\rightarrow S_2$ is an $\epsilon$ isomorphism between $f_1$ and $f_2$ and assume that $\beta:S'_2\rightarrow S_3$ is an $\epsilon'$ isomorphism between $f_2$ and $f_3$. Without loss of generality we can assume (by reversing arrows if necessary) that $\epsilon'\geq\epsilon$. We have the following inclusions: $$\beta^{-1}(\supp_{\epsilon'+\epsilon}(f_3))\subseteq\beta^{-1}(\supp_{\epsilon'}(f_3))\subseteq\beta^{-1}(S_3)= S_2',$$  $$\beta^{-1}(\supp_{\epsilon'+\epsilon}(f_3))\subseteq\supp_\epsilon(f_2)\subseteq S_2,$$ $$\alpha(\supp_{\epsilon'+\epsilon}(f_1))\subseteq\supp_{\epsilon'}(f_2)\subseteq S_2\cap S_2'.$$
Let $T_2=\beta^{-1}(\supp_{\epsilon'+\epsilon}(f_3))\cup\supp_{\epsilon'}(f_2)$ (note that $T_2\subseteq S_2\cap S_2'$) and let $T_1=\alpha^{-1}(T_2)$, $T_3=\beta(T_2)$. We have that $\supp_{\epsilon'+\epsilon}(f_1)\subseteq T_1$ and $\supp_{\epsilon'+\epsilon}(f_3)\subseteq T_3$. Let $\gamma:T_1\rightarrow T_3$ be the restriction of $\beta\circ\alpha$ to $T_1$. To complete the proof of the triangle inequality we show that $\gamma$ is an $\epsilon'+\epsilon$ isomorphism. We have that $\gamma$ is a bijection and that $|f_1(g_1)-f_3(\gamma(g_1))|\leq\epsilon'+\epsilon$ holds for every $g\in T_1$. It remains to check that $\gamma$ is a partial isomorphism of weight $\lceil 1/(\epsilon'+\epsilon)\rceil$. This follows form the fact that the composition of a partial isomorphism of weight $n$ and a partial isomorphism of weight $m$ is a partial isomorphism of weight $\min(n,m)$. However the minimum of $\lceil 1/\epsilon\rceil$ and $\lceil 1/\epsilon'\rceil$ is at least $\lceil 1/(\epsilon'+\epsilon)\rceil$. 
\end{proof}

\medskip

\begin{lemma}\label{limisab} Assume that a sequence $\{f_i\}_{i=1}^\infty$ of $l^2$ functions on abelian groups converge in $\hat{d}$ to $f\in l^2(G)$ then $\langle f\rangle$ is also abelian.
\end{lemma}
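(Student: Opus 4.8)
The plan is to prove directly that every pair of elements of $\supp(f)$ commutes in $G$. Since $\langle f\rangle$ is by definition the subgroup of $G$ generated by $\supp(f)$, this immediately yields that $\langle f\rangle$ is abelian. So fix $g,h\in\supp(f)$. Because $\supp(f)=\bigcup_{\eta>0}\supp_\eta(f)$, there is some $\eta_0>0$ with $g,h\in\supp_{\eta_0}(f)$.

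The next step is to pick a test scale $\epsilon$ small enough to detect the commutator. I would take $\epsilon=\min(\eta_0,1/4)$, so that on the one hand $\supp_\epsilon(f)\supseteq\supp_{\eta_0}(f)\ni g,h$, and on the other hand the weight $\lceil 1/\epsilon\rceil$ of any $\epsilon$-isomorphism is at least $4$. Since $\hat d(f_i,f)\to 0$ and $\hat d$ is symmetric (it is a metric), for all large $i$ we have $\hat d(f,f_i)<\epsilon$; hence there is an $\epsilon'$-isomorphism between $f$ and $f_i$ for some $\epsilon'<\epsilon$, and such a map is in particular an $\epsilon$-isomorphism (a partial isomorphism of weight $\lceil1/\epsilon'\rceil\geq\lceil1/\epsilon\rceil$ is one of weight $\lceil1/\epsilon\rceil$, and $\supp_\epsilon(f)\subseteq\supp_{\epsilon'}(f)\subseteq S_1$, etc.). Fix such an $\epsilon$-isomorphism $\phi:S_1\to S_2$ with $\supp_\epsilon(f)\subseteq S_1\subseteq G$ and $S_2\subseteq G_i$. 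In particular $g,h\in S_1$, so $\phi(g),\phi(h)\in G_i$.

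Now I would use that $G_i$ is abelian, so $\phi(g)\phi(h)\phi(g)^{-1}\phi(h)^{-1}=1$ in $G_i$. Applying the defining ``if and only if'' property of the partial isomorphism $\phi$ to the length-$4$ word with letters $g_1=g,\ g_2=h,\ g_3=g,\ g_4=h$ and exponents $\alpha_1=\alpha_2=1$, $\alpha_3=\alpha_4=-1$ (padding with finitely many exponent-$0$ letters if $\lceil1/\epsilon\rceil>4$; repeated letters are permitted by the definition), we conclude $ghg^{-1}h^{-1}=1$ in $G$, i.e. $g$ and $h$ commute. Since $g,h\in\supp(f)$ were arbitrary, $\langle f\rangle$ is abelian.

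The content of the argument is just that ``being abelian'' is witnessed by relations of length $4$, hence is stable under $\hat d$-convergence; the only point requiring a moment's care is the bookkeeping that forces $\epsilon\leq 1/4$, needed so that the weight of the partial isomorphism is at least the length of the commutator word. Everything else — symmetry of $\hat d$, the remark that a finer $\epsilon'$-isomorphism is also an $\epsilon$-isomorphism, and that padding a relator with zero exponents changes nothing — is routine from the definitions, and I do not expect any genuine obstacle.
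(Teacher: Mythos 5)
Your proof is correct and follows essentially the same route as the paper: fix two elements of $\supp(f)$, choose $\epsilon\leq 1/4$ small enough that both lie in $\supp_\epsilon(f)$, and use the weight-$\geq 4$ partial isomorphism into an abelian $G_i$ to transfer the length-four commutator relation back to $G$. The paper makes the choice $\epsilon=\min(|f(g_1)|/2,|f(g_2)|/2,1/4)$ where you use an arbitrary $\eta_0$ with $g,h\in\supp_{\eta_0}(f)$, and it leaves implicit the padding-with-zero-exponents and ``finer isomorphism is coarser isomorphism'' points that you spell out; these are only cosmetic differences.
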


\begin{proof} Let $g_1,g_2\in\supp(f)$ be two elements. Let $\epsilon=\min(f(g_1)/2,f(g_2)/2,1/4)$. Then by convergence of $f_i$ there is an index $i$ such that there is an $\epsilon$-isomorphism $\phi$ between $f$ and $f_i$. Since $g_1,g_2\in\supp_\epsilon f$ we have that $\phi$ is defined on $g_1,g_2$ and $\phi(g_1)\phi(g_2)\phi(g_1)^{-1}\phi(g_2)^{-1}=1$ implies that $g_1g_2g_1^{-1}g_2^{-1}=1$ because $\epsilon<1/4$.  
\end{proof} 

\medskip

For every real number $a>0$ let $\mathcal{M}_a$ denote the subset of $\mathcal{M}$ consisting of equivalence classes of functions $f\in l^2(G)$ with $\|f\|_2\leq a$.

\begin{proposition}\label{metcomp} The metric space $(\mathcal{M}_a,\hat{d})$ is compact for every $a>0$.
\end{proposition}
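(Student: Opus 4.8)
The plan is to establish sequential compactness, which is equivalent to compactness for a metric space (equivalently, one shows $(\mathcal{M}_a,\hat d)$ is complete and totally bounded). Given a sequence of $l^2$ functions $f_i$ on groups $G_i$ with $\|f_i\|_2\le a$, one first replaces each $G_i$ by $\langle f_i\rangle$, so that $\supp(f_i)$ generates $G_i$. The one uniform estimate used throughout is $|\supp_\epsilon(f_i)|\le a^2/\epsilon^2$; in particular, listing $\supp(f_i)=\{g_{i,1},g_{i,2},\dots\}$ so that $|f_i(g_{i,1})|\ge|f_i(g_{i,2})|\ge\cdots$ (and setting $g_{i,l}=1$ with formal value $0$ once the support is exhausted), one gets the index-uniform bound $|f_i(g_{i,l})|\le a/\sqrt l$.

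The heart of the argument is a compactness/pigeonhole step on finite ``local data''. For positive integers $j,k$, attach to $f_i$ its $(j,k)$-type: the record of which words $g_{i,l_1}^{\alpha_1}\cdots g_{i,l_r}^{\alpha_r}$ with $r\le k$, $l_t\le j$, $\alpha_t\in\{-1,0,1\}$ are trivial in $G_i$, together with the values $f_i(g_{i,1}),\dots,f_i(g_{i,j})$ rounded to a fixed $1/k$-grid in the disc of radius $a$. For each $(j,k)$ there are only finitely many possible types. I would now fix a non-principal ultrafilter $\mathcal{U}$ on $\mathbb{N}$; for every $(j,k)$ the $(j,k)$-type is then $\mathcal{U}$-almost everywhere equal to a fixed type $\sigma_{j,k}$, and these are automatically mutually consistent. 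Set $v_l:=\lim_{\mathcal{U}}f_i(g_{i,l})$, which exists and has $|v_l|$ non-increasing; let $L=\{l:v_l\ne0\}$, necessarily an initial segment of $\mathbb{N}$, and note $\sum_{l\in L}|v_l|^2\le a^2$ by passing to finite partial sums inside the ultralimit.

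Next I would build the limit object from this stabilized data. Take the ultraproduct group $\mathbf{G}=\prod_i G_i/\mathcal{U}$, let $h_l=[(g_{i,l})_i]_{\mathcal{U}}$ for $l\in L$, and put $G=\langle h_l:l\in L\rangle$. Since a word is trivial in $\mathbf{G}$ exactly when it is trivial for $\mathcal{U}$-almost all $i$, the $h_l$ ($l\in L$) are pairwise distinct and every relation of bounded weight among them is exactly the one recorded by the $\sigma_{j,k}$. Define $f\colon G\to\mathbb{C}$ by $f(h_l)=v_l$ for $l\in L$ and $f\equiv0$ elsewhere; then $\supp(f)=\{h_l:l\in L\}$, $\langle f\rangle=G$, and $\|f\|_2^2=\sum_{l\in L}|v_l|^2\le a^2$, so $f$ represents a point of $\mathcal{M}_a$. (If one prefers to avoid ultraproducts: pass first to a subsequence along which every $(j,k)$-type stabilizes, and let $G$ be the free group on $\{x_l:l\in L\}$ modulo all words that become trivial in the limit; this presentation is faithful because a relation in the normal closure of a set of relators already lies in the normal closure of finitely many of them, and for $i$ large enough the homomorphism $x_l\mapsto g_{i,l}$ into $G_i$ kills those finitely many relators.)

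Finally, I would verify $\hat d(f_i,f)\to0$ along $\mathcal{U}$, i.e.\ $\{i:\hat d(f_i,f)\le\epsilon\}\in\mathcal{U}$ for each $\epsilon>0$, which since $\mathcal{U}$ is non-principal produces a subsequence converging to $f$. Fix $\epsilon>0$; we may assume $\epsilon$ avoids the countable set $\{|v_l|\}$. With $M:=|\{l:|v_l|>\epsilon\}|<\infty$ one gets $\supp_\epsilon(f)=\{h_1,\dots,h_M\}$, and for $\mathcal{U}$-almost all $i$ also $\supp_\epsilon(f_i)=\{g_{i,1},\dots,g_{i,M}\}$ with these $M$ elements distinct; restricting further to the $\mathcal{U}$-large set on which the $(M,\lceil 1/\epsilon\rceil)$-type equals $\sigma_{M,\lceil 1/\epsilon\rceil}$ and $|f_i(g_{i,l})-v_l|<\epsilon$ for $l\le M$, the map $g_{i,l}\mapsto h_l$ is a weight-$\lceil 1/\epsilon\rceil$ partial isomorphism matching $\supp_\epsilon(f_i)$ to $\supp_\epsilon(f)$ with values within $\epsilon$, hence an $\epsilon$-isomorphism between $f_i$ and $f$. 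The main obstacle is the middle step: producing a single group that simultaneously and faithfully realizes all the limiting local relations — this is precisely what makes ultraproducts convenient here — and once the limit object is in hand the convergence verification is routine bookkeeping with supports and weights.
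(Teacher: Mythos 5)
Your proposal is correct and takes essentially the same route as the paper: order each support by decreasing $|f_i|$, stabilize all bounded-weight relations among the leading support elements together with the values, realize the stabilized relations faithfully in a single limit group, and match the top $O(a^2/\epsilon^2)$ elements to produce $\epsilon$-isomorphisms. The only difference is packaging — the paper extracts a diagonal subsequence and uses a presented group (Lemma \ref{bsconv}, whose faithfulness argument is exactly your parenthetical ``finitely many relators'' remark), while you phrase the same extraction via an ultrafilter and the ultraproduct group, which makes the faithfulness of the relations automatic.
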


Let $F_r$ denote the free group in $r$ generators. We will need the next lemma.

\begin{lemma}\label{bsconv} Assume that $\{G_n\}_{n=1}^\infty$ is a sequence of groups and for every $n$ we have a sequence of elements $\{g_{n,i}\}_{i=1}^\infty$ in $G_n$. Then there is a sequence of elements $\{g_i\}_{i=1}^\infty$ in some group $G$ and a set $S\subseteq\mathbb{N}$ such that for every $r\in\mathbb{N}$ and word $w\in F_r$ there is a natural number $N_w$ such that if $k\in S$ and $k>N_w$ then $w(g_{k,1},g_{k,2},\dots,g_{k,r})=1$ if and only if $w(g_1,g_2,\dots,g_r)=1$. 
\end{lemma}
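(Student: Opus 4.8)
The plan is to extract, via a diagonal argument, a subsequence along which all word relations among the first $r$ elements stabilize for every $r$, and then build the limit group $G$ as a quotient of the free product (or direct limit) of free groups by the relations that hold cofinally. First I would enumerate the countable set of all pairs $(r,w)$ with $r\in\mathbb{N}$ and $w\in F_r$; there are only countably many such pairs. For a fixed pair $(r,w)$ and a fixed infinite index set $T\subseteq\mathbb{N}$, either $w(g_{n,1},\dots,g_{n,r})=1$ for infinitely many $n\in T$, or $w(g_{n,1},\dots,g_{n,r})\neq 1$ for infinitely many $n\in T$; in either case we can pass to an infinite subset of $T$ on which the value of ``$w(g_{n,1},\dots,g_{n,r})\overset{?}{=}1$'' is constant. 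Running this over an enumeration $(r_1,w_1),(r_2,w_2),\dots$ of all pairs and taking nested infinite subsets $\mathbb{N}\supseteq T_1\supseteq T_2\supseteq\cdots$, the diagonal set $S$ (the $j$-th element of $S$ being the $j$-th element of $T_j$) has the property that for every pair $(r,w)$ the truth value of $w(g_{k,1},\dots,g_{k,r})=1$ is eventually constant for $k\in S$; call $N_w$ a threshold past which it is constant. This is exactly the stabilization statement in the lemma, except that I still must produce the target group $G$ and elements $g_i$ realizing the ``limit'' relations.

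For the group $G$: let $\mathcal{R}$ be the set of all words $w$ (in the free group $F_\infty$ on countably many generators $x_1,x_2,\dots$, which contains each $F_r$ as the subgroup on $x_1,\dots,x_r$) such that $w(g_{k,1},\dots,g_{k,r})=1$ for all sufficiently large $k\in S$ — i.e. the relations that hold eventually along $S$. Define $G:=F_\infty/\langle\!\langle \mathcal{R}\rangle\!\rangle$, the quotient by the normal closure of $\mathcal{R}$, and let $g_i$ be the image of $x_i$ in $G$. Then for any $w\in F_r$: if $w$ eventually evaluates to $1$ along $S$, then $w\in\mathcal{R}$ by definition, so $w(g_1,\dots,g_r)=1$ in $G$. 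The content of the lemma is the converse direction, and here is where the only real subtlety lies: I must check that if $w(g_1,\dots,g_r)=1$ in $G$ — equivalently $w$ lies in the normal closure of $\mathcal{R}$, so $w=\prod_{j} u_j v_j^{\pm1} u_j^{-1}$ for finitely many $v_j\in\mathcal{R}$ and $u_j\in F_\infty$ — then $w$ also eventually evaluates to $1$ along $S$. But this is immediate: each $v_j\in\mathcal{R}$ evaluates to $1$ at $(g_{k,1},g_{k,2},\dots)$ for all $k\in S$ past some threshold, hence so does any conjugate $u_jv_j^{\pm1}u_j^{-1}$, hence so does their product $w$, for $k\in S$ past the maximum of the finitely many thresholds. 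So the two truth values ``$w=1$ in $G$'' and ``$w=1$ eventually along $S$'' coincide, and since by the diagonal construction the latter is the same as ``$w(g_{k,1},\dots,g_{k,r})=1$ for a particular large $k\in S$'' (the value being eventually constant), we get the stated equivalence with $N_w$ the relevant threshold.

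The step I expect to be the main (though modest) obstacle is purely bookkeeping: organizing the diagonalization so that ``weight/word'' quantifiers line up correctly — i.e. making sure a single set $S$ works simultaneously for all $r$ and all $w\in F_r$, and that the thresholds $N_w$ depend only on $w$ and not on the stage of the diagonal construction. This is handled cleanly by the standard diagonal trick: the $j$-th element of $S$ decides the $j$-th pair $(r_j,w_j)$ in the enumeration, and once we are at the $j$-th element of $S$ or beyond, all of the first $j$ pairs have their truth values frozen; since every pair $(r,w)$ appears at some finite position in the enumeration, every $w$ acquires a well-defined threshold $N_w$. No quantitative estimates are needed; the argument is a soft compactness/diagonalization plus the elementary fact that the normal closure of a set of relations evaluates to the identity wherever all those relations do.
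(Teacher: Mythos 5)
Your proposal is correct and follows essentially the same route as the paper: enumerate all pairs $(r,w)$, pass to nested infinite index sets on which each relation's truth value is constant, diagonalize to get $S$, and present $G$ by the generators $g_i$ subject to the cofinally-holding relations. The only cosmetic difference is in the final verification that $G$ satisfies no extra relations: you argue directly that the normal closure of $\mathcal{R}$ is killed by the evaluation homomorphisms $F_\infty\to G_k$ for large $k\in S$, whereas the paper phrases the same fact contrapositively via a witness group $G_{s_i}$ in which a finite subset of $W$ holds but $w$ fails.
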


\begin{proof} Let $\{w_1\}_{i=1}^\infty$ be an arbitrary ordering of the words in $\cup_{r=1}^\infty F_r$ with $w_i\in F_{r_i}$.
We construct a sequence of infinite subsets $S_i\subseteq\mathbb{N}$ in a recursive way. Assume that $S_0=\mathbb{N}$. If $S_{i-1}$ is already constructed then we construct $S_i$ in a way that $S_i$ is an infinite subset in $S_{i-1}$ and either $w_i(g_{s,1},g_{s,2},\dots,g_{s,r_i})=1$ holds for every $s\in S_i$ or $w_i(g_{s,1},g_{s,2},\dots,g_{s,r_i})\neq 1$ holds for every $s\in S_i$. This can be clearly achieved since $S_{i-1}$ is infinite. We then chose a sequence $\{s_i\}_{i=1}^\infty$ such that $s_i\in S_i$ and $s_i<s_j$ hold for every pair $i<j$. We obtain for $\{s_i\}_{i=1}^\infty$ that for every $r\in\mathbb{N}$ and word $w\in F_r$ either $w(g_{s_i,1},g_{s_i,2},\dots,g_{s_i,r})=1$ holds with finitely many exceptions or $w_r(g_{s_i,1},g_{s_i,2},\dots,g_{s_i,r})\neq 1$ holds with finitely many exceptions. Let $W$ denotes the collection of words for which the first case holds. Let $G$ be the group with generators $\{g_i\}_{i=1}^\infty$ and relations $\{w(g_1,g_2,\dots,g_r)=1|r\in \mathbb{N}, w\in F_r\cap W\}$.  It is clear form the construction of $W$ that every relation that $G$ satisfies in its generators is already listed in $W$. This follows from the fact that if a word $w$ is not in $W$ then for an arbitrary finite subset $W'$ in $W$ there is a witness among the groups $G_{s_i}$ in which $w$ does not hold but all words in $W'$ hold. Now we have that $S=\{s_i\}_{i=1}^\infty$ and $G$ with $\{g_i\}_{i=1}^\infty$ satisfies the lemma. 
\end{proof}

\medskip

\noindent{\it Proof of proposition \ref{metcomp}.}~~   Let $\{f_n:G_n\rightarrow\mathbb{C}\}_{n=1}^\infty$ be a sequence of functions of $l^2$ norm at most $a$.
For every $n$ let $\{g_{n,i}\}_{i=1}^\infty$ be an ordering of the elements in $\supp(f_n)$ is such a way that $f_n(g_{n,i})\geq f_n(g_{n,j})$ whenever $i<j$. (if $f_n$ is defined on a finite group then, to make the list infinite, we can extend it to an infinite group containing $G_n$ with $0$ values outside $G_n$.)
Let $S\subseteq\mathbb{N}$, $G$ and $\{g_i\}_{i=1}^\infty$ be chosen for the sequences $\{g_{n,i}\}_{i=1}^\infty$ according to lemma \ref{bsconv}. 
Let $S'\subseteq S$ be an infinite subset of $S$ such  $a_i:=\lim_{n\rightarrow\infty,n\in S'}f_n(g_{n,i})$ exists for every $i\in\mathbb{N}$. Now we define the function $f:G\rightarrow\mathbb{C}$ such that $f(g_i)=a_i$ inside the set $\{g_i\}_{i=1}^\infty$ and $f(g)=0$ for the rest of the elements. It is clear that $f$ is well defined since $g_{n,i}\neq g_{n,j}$ holds for every $n$ if $i\neq j$ and thus $g_i\neq g_j$. It is clear that $\|f\|_2\leq\liminf_{n\in S'}\|f_n\|_2$ and thus $\|f\|_2\leq a$. 

To create an $\epsilon$-isomorphism between $f$ and $f_n$  (if $n\in S'$ is big enough) we consider the sets $T_n=\{g_{n,i}:i\leq a^2/\epsilon^2\}$ and the set $T=\{g_i:i\leq a^2/\epsilon^2\}$. Let $\alpha_n:T_n\rightarrow T$ be the bijection defined by $\alpha_n(g_{n,i})=g_i$. It is clear that $\supp_\epsilon(f_n)\subseteq T_n$ holds for every $n$ and that $\supp_\epsilon(f)\subseteq S$. The construction guarantees that $|f_n(g)-f(\alpha_n(g)|\leq\epsilon$ holds if $n\in S'$ is big enough. Furthermore the property given by lemma \ref{bsconv} shows that $\alpha_n$ is a partial isomorphism of weight $m$ for an arbitrary $m\in \mathbb{N}$ if $n\in S'$ is big enough. This completes the proof.

\medskip

\section{Convergence notions on compact Abelian groups}\label{chapcomplim}

Compact abelian groups in this paper will be assumed to be second countable. In this case the dual group is always countable. For a compact abelian group $G$ we denote by $L^2(G)$ the Hilbert space of Borel measurable complex valued functions  $f$ on $G$ with $\int |f|^2~d\mu\leq\infty$ where $\mu$ is the normalized Haar measure.

Let $f_1\in L^2(G_1)$ and $f_2\in L^2(G_2)$ be functions on the compact abelian groups $G_1$ and $G_2$.
We say that $f_1,f_2$ are isomorphic if there is a third function $f_3\in L^2(G_3)$ and continuous epimorphisms $\alpha_i:G_i\rightarrow G_3$ for $i=1,2$ such that $f_3(\alpha_i(g))=f_i(g)$ holds for almost every $g$ with respect to the Haar measure in $G_i$.

For a function $f\in L^2(G)$ on a compact abelian group we denote by $\hat{f}:\hat{G}\rightarrow\mathbb{C}$ the Fourier transform of $f$ where the discrete group $\hat{G}$ is the dual of $G$. It is clear that $f_1\in L^2(G_1)$ is isomorphic to $f_2\in L^2(G_2)$ if and only if $\hat{f_1}$ is isomorphic to $\hat{f_2}$ in the sense of chapter \ref{disclim}.

Let $\mathcal{H}$ denote the set of isomorphism classes of Borel measurable $L^2$ functions on compact Abelian groups. We introduce the distance $d$ on $\mathcal{H}$ by $d(f_1,f_2):=\hat{d}(\hat{f_1},\hat{f_2})$. The metric $d$ induces a convergence notion on $\mathcal{H}$. If we say $\{f_i\}_{i=1}^\infty$ is convergent then we mean convergence in $d$ if not stated explicitly in which other meaning it is convergent. Let $\mathcal{H}_a$ denote the set of functions in $\mathcal{H}$ with $L^2$-norm at most $a$.  Using the fact that Fourier transform preserves the $L^2$-norm we have by lemma \ref{limisab} and proposition \ref{metcomp} the following statement. 

\begin{proposition} $(\mathcal{H}_a,d)$ is a compact metric space for every $a>0$.
\end{proposition}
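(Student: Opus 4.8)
The plan is to transport the compactness of $(\mathcal{M}_a,\hat d)$ (proposition \ref{metcomp}) across the Fourier transform. First I would observe that the assignment $F\colon f\mapsto\hat f$ gives a well-defined map from $\mathcal{H}_a$ into $\mathcal{M}_a$: by Parseval's identity $\|\hat f\|_2=\|f\|_2\le a$, and, as noted just before the definition of $d$, two functions $f_1,f_2$ are isomorphic in $\mathcal{H}$ exactly when $\hat f_1,\hat f_2$ are isomorphic in $\mathcal{M}$, so $F$ descends to isomorphism classes and is injective. By the very definition $d(f_1,f_2)=\hat d(\hat f_1,\hat f_2)$, so $F$ is an isometric embedding of $(\mathcal{H}_a,d)$ into $(\mathcal{M}_a,\hat d)$ (in particular $d$ is a metric, being the pullback of the metric $\hat d$ along an injection). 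Hence it suffices to prove that the image $F(\mathcal{H}_a)$ is a \emph{closed} subset of the compact space $\mathcal{M}_a$: a closed subset of a compact metric space is compact, and an isometry onto a compact space makes its domain compact.

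Next I would identify the image. I claim $F(\mathcal{H}_a)$ is exactly the set $\mathcal{M}_a^{\mathrm{ab}}$ of those classes in $\mathcal{M}_a$ representable by an $l^2$ function $h$ whose generated group $\langle h\rangle$ is abelian. The inclusion $F(\mathcal{H}_a)\subseteq\mathcal{M}_a^{\mathrm{ab}}$ is immediate, since the dual of a compact abelian group is a discrete abelian group, so $\langle\hat f\rangle$ is abelian. For the reverse inclusion, given $h\in l^2(G)$ with $\Gamma:=\langle h\rangle$ abelian (a countable discrete abelian group), Pontryagin duality provides a compact abelian group $A:=\hat\Gamma$ with $\hat A\cong\Gamma$, and $A$ is second countable precisely because $\Gamma$ is countable. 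Viewing $h\in l^2(\hat A)$ (extended by zero off $\Gamma$), the inverse Fourier transform, a surjective isometry $l^2(\hat A)\to L^2(A)$ by Parseval, yields $f\in L^2(A)$ with $\|f\|_2=\|h\|_2\le a$ and $\hat f=h$; thus $h\in F(\mathcal{H}_a)$.

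Finally I would show $\mathcal{M}_a^{\mathrm{ab}}$ is closed in $\mathcal{M}_a$. If $h_i\in\mathcal{M}_a^{\mathrm{ab}}$ converges in $\hat d$ to some $h\in l^2(G)$, then, choosing representatives $h_i$ living on the abelian groups $\langle h_i\rangle$, lemma \ref{limisab} gives that $\langle h\rangle$ is abelian, so $h\in\mathcal{M}_a^{\mathrm{ab}}$. Combining the three steps: $F(\mathcal{H}_a)=\mathcal{M}_a^{\mathrm{ab}}$ is closed in the compact metric space $(\mathcal{M}_a,\hat d)$, hence compact, and $F$ is an isometric bijection of $(\mathcal{H}_a,d)$ onto it, so $(\mathcal{H}_a,d)$ is compact.

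The only genuinely nontrivial input beyond results already established in the excerpt is Pontryagin duality as used in the second step, namely that every countable discrete abelian group is the dual of a second countable compact abelian group and that the Fourier transform is a surjective isometry $L^2(A)\to l^2(\hat A)$; everything else is formal manipulation of the isometry $F$ together with appeals to lemma \ref{limisab} and proposition \ref{metcomp}. I would expect the write-up to be short, with the main care needed in checking that $F$ is well defined on isomorphism classes and that the second-countability convention on compact groups matches countability of duals, so that the identification $F(\mathcal{H}_a)=\mathcal{M}_a^{\mathrm{ab}}$ is an honest bijection rather than merely an inclusion.
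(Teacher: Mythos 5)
Your argument is correct and is exactly the route the paper takes: the paper dispatches this proposition in one line by citing the isometry of the Fourier transform together with Proposition \ref{metcomp} and Lemma \ref{limisab}, and your write-up simply fills in the details of that citation (the isometric embedding $f\mapsto\hat f$, the identification of the image with the abelian-generated classes via Pontryagin duality, and closedness of that image via Lemma \ref{limisab}). No discrepancies to report.
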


For a set $K\subseteq\mathbb{C}$ let $\mathcal{H}(K)$ denote the set of functions in $\mathcal{H}$ which take  values in $K$. We will prove the next theorem.
 
 \begin{theorem}\label{convcl} If $K\subseteq\mathbb{C}$ is a compact convex set then $(\mathcal{H}(K),d)$ is a compact metric space.
 \end{theorem}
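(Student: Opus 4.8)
\medskip

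The plan is to obtain $(\mathcal{H}(K),d)$ as a closed subspace of a space already known to be compact. Set $a:=\max_{z\in K}|z|$, which is finite since $K$ is compact; if $f\in L^2(G)$ takes values in $K$ then $\|f\|_2^2=\int|f|^2\,d\mu\le a^2$, so $\mathcal{H}(K)\subseteq\mathcal{H}_a$, and $(\mathcal{H}_a,d)$ is compact by the preceding proposition. Since a subset of a compact metric space is compact if and only if it is closed, it is enough to show: whenever $f_i\to g$ in $d$ with each $f_i$ taking values in $K$ almost everywhere, the isomorphism class of $g$ contains a representative with values in $K$ almost everywhere.

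To build such a representative I would use the ultraproduct construction announced in the introduction. Fix a non-principal ultrafilter $\omega$ on $\mathbb{N}$ and form the ultraproduct $\mathbf{A}=\prod_{i\to\omega}A_i$ of the compact abelian groups on which the $f_i$ live, equipped with the Loeb probability measure, so that $\mathbf{A}$ becomes a measure preserving system. Because $K$ is closed, the ultralimit $\mathbf{f}:=\lim_\omega f_i$ is a measurable function on $\mathbf{A}$ with values in $K$. Let $\mathcal{F}\subseteq\mathbf{A}$ be the factor generated by the measurable characters of $\mathbf{A}$ (the variant of the Kronecker factor), which is naturally the $L^2$-space of a genuine compact abelian group, and put $\tilde f:=\mathbb{E}(\mathbf{f}\mid\mathcal{F})$. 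The core assertion is that $\tilde f$ represents the $d$-limit of the sequence, i.e.\ $\lim_\omega d(f_i,\tilde f)=0$; granting it, $d(g,\tilde f)\le\lim_\omega\bigl(d(g,f_i)+d(f_i,\tilde f)\bigr)=0$, hence $g$ is isomorphic to $\tilde f$.

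Convexity of $K$ enters precisely in passing from $\mathbf{f}$ to $\tilde f$: writing $K=\bigcap_n\{z:\mbox{Re}(\overline{u_n}z)\le c_n\}$ as a countable intersection of closed half-planes, each inequality $\mbox{Re}(\overline{u_n}\mathbf{f})\le c_n$ holds a.e., and conditional expectation preserves it, so $\mbox{Re}(\overline{u_n}\tilde f)\le c_n$ a.e.; intersecting over $n$ gives $\tilde f\in K$ a.e. Thus $g\in\mathcal{H}(K)$, so $\mathcal{H}(K)$ is closed in $\mathcal{H}_a$, hence compact. I expect the main difficulty to be the core assertion that $\tilde f$ realizes the $d$-limit: it amounts to identifying the Fourier coefficients $\langle\mathbf{f},\chi\rangle$ of $\tilde f$, over ultralimits $\chi=\lim_\omega\chi_i$ of characters, with the $\hat d$-limit of the $\hat f_i$, and it presupposes that the ultraproduct/Loeb-measure apparatus is in place.

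A more hands-on route avoiding ultraproducts would be: since $f\mapsto uf$ with $|u|=1$ is a $d$-isometry, reduce to showing each half-plane class $\mathcal{H}(\{\mbox{Re}(z)\le c\})\cap\mathcal{H}_a$ is closed; restate ``$\mbox{Re}(g)\le c$ a.e.'' via Bochner's theorem as positive semidefiniteness of every finite matrix $\bigl(\widehat{c-\mbox{Re}\,g}\,(\chi_j-\chi_k)\bigr)_{j,k}$ over $\chi_1,\dots,\chi_m\in\hat G$; and transport these matrices through an $\epsilon_i$-isomorphism to pass to the limit. The delicate step there is extending a given $\epsilon_i$-isomorphism to a prescribed finite, difference-closed subset of $\hat G$ while preserving its weight and $\epsilon_i$-tolerance, which is possible for large $i$ because the limit $g$ already encodes all the relevant relations among characters. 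Finally, convexity is genuinely needed: for $K=\{-1,1\}$ a sufficiently quasirandom sequence of $\{-1,1\}$-valued functions converges in $d$ to the constant $0\notin K$.
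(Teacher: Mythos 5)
Your proposal is correct and follows essentially the same route as the paper: reduce to closedness of $\mathcal{H}(K)$ in the compact space $\mathcal{H}_a$, pass to the ultraproduct, project onto the Fourier (Kronecker) $\sigma$-algebra, and use convexity of $K$ to see that the conditional expectation still takes values in $K$; your ``core assertion'' is exactly what the paper supplies via corollary \ref{charconv2} and theorem \ref{factor}. Your half-plane argument for the convexity step is in fact more explicit than the paper's one-line remark, and your $K=\{-1,1\}$ example correctly shows convexity cannot be dropped.
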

 
 \begin{corollary}\label{setconv} IF $\{f_i\}_{i=1}^\infty$ is a sequence of $\{0,1\}$ valued functions in $\mathcal{H}$ converging to $f$ in the metric $d$ then the values of $f$ are in the interval $[0,1]$.
 \end{corollary}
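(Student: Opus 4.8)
The plan is to deduce this directly from Theorem \ref{convcl} applied to the compact convex set $K=[0,1]\subseteq\mathbb{C}$. First I would note that a $\{0,1\}$-valued function lies in $\mathcal{H}(K)$: since $\{0,1\}\subseteq[0,1]$, each $f_i$ takes values in $[0,1]$, so $f_i\in\mathcal{H}([0,1])$ for all $i$. Moreover $K$ is bounded (by $1$) and the Haar measure is a probability measure, so every function in $\mathcal{H}([0,1])$ has $L^2$-norm at most $1$; thus the whole situation takes place inside the compact metric space $(\mathcal{H}_1,d)$, and in particular $\mathcal{H}([0,1])$ is a subset of it.

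Next I would invoke Theorem \ref{convcl} to conclude that $(\mathcal{H}([0,1]),d)$ is itself a compact metric space. Since $(\mathcal{H}_1,d)$ is a metric space, hence Hausdorff, a compact subset of it is closed; therefore $\mathcal{H}([0,1])$ is closed in $\mathcal{H}_1$. A closed subset of a metric space contains the limits of all its convergent sequences, so from $f_i\to f$ in $d$ with each $f_i\in\mathcal{H}([0,1])$ we get $f\in\mathcal{H}([0,1])$. By the very definition of $\mathcal{H}(K)$ this means that $f$ takes values in $[0,1]$, which is the assertion of the corollary.

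I do not expect any real obstacle here, as the statement is an immediate consequence of Theorem \ref{convcl}; the only point worth a moment's care is that ``$f$ takes values in $[0,1]$'' is to be understood in the almost-everywhere sense attached to isomorphism classes in $\mathcal{H}$, and that this is exactly what membership in $\mathcal{H}([0,1])$ encodes, so no separate verification is needed once closedness is in hand. (One could in principle argue more directly, without quoting the closedness of $\mathcal{H}([0,1])$, by showing that an $\epsilon$-isomorphism from $\hat f$ to some $\hat f_i$ together with the $\{0,1\}$-valuedness of $f_i$ constrains the Fourier data of $f$ enough to rule out values of $f$ outside $[0,1]$ on a positive-measure set; but routing the argument through Theorem \ref{convcl} is far cleaner and is the approach I would take.)
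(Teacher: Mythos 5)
Your proposal is correct and is exactly the intended derivation: the paper offers no separate argument for Corollary \ref{setconv}, treating it as an immediate consequence of Theorem \ref{convcl} applied to $K=[0,1]$, and your route (compactness of $\mathcal{H}([0,1])$ implies closedness in the Hausdorff space $(\mathcal{H}_1,d)$, hence the limit stays in $\mathcal{H}([0,1])$) is the standard way to make that implication explicit. Your closing remark that ``takes values in $[0,1]$'' is meant in the almost-everywhere sense attached to isomorphism classes is also the right reading.
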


Theorem \ref{convcl} is somewhat surprising. The metric $d$ is given in terms of Fourier transforms however it is not trivial to relate the set of values of a function to the properties of its Fourier transform. The condition that $K$ is convex turns out to be necessary in theorem \ref{convcl}. Corollary \ref{setconv} is useful when we study limits of sets in abelian groups by the limits of their characteristic functions. We give the proof of theorem \ref{convcl} in a later chapter.

We say that a sequence $\{f_i\}_{i=1}^\infty$ in $\mathcal{H}$ is {\it tightly convergent} if it converges in $d$ and the limit $f$ satisfies $\lim_{i\rightarrow\infty}\|f_i\|_2=\|f\|_2$. Tight convergence can be metrized by the distance $$d'(f_1,f_2):=d(f_1,f_2)+|\|f_1\|_2-\|f_2\|_2|.$$
Convergence in $d'$ is stronger than convergence in $d$ and it has stronger consequences. To formulate our result we need the following notation.
For a measurable function $f$ on a compact abelian group $A$ we denote by $\mu_f$ the probability distribution of $f(x)$ where $x$ is chosen randomly from $A$ according to the Haar measure. The measure $\mu_f$ is a Borel probability distribution on $\mathbb{C}$.

\begin{theorem}\label{weakcl} Let $\{f_i\}_{i=1}^\infty$ be a sequence of uniformly bounded functions in $\mathcal{H}$ converging to $f$ in $d'$. Then $\mu_{f_i}$ converges to $\mu_f$ in the weak topology of measures.
\end{theorem}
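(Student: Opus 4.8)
The goal is to show that if $f_i\to f$ in $d'$ with a uniform bound $\|f_i\|_\infty\le C$, then the push-forward measures $\mu_{f_i}$ converge weakly to $\mu_f$. Since all these measures are supported in the fixed compact disk $\{|z|\le C\}\subset\mathbb C$, the space of probability measures on this disk is weak-* compact and metrizable, so it suffices to show that every weak-* limit point of $\{\mu_{f_i}\}$ equals $\mu_f$. Equivalently, by the Stone–Weierstrass theorem applied to polynomials in $z$ and $\bar z$, it is enough to prove that $\int z^a\bar z^b\,d\mu_{f_i}\to\int z^a\bar z^b\,d\mu_f$ for all nonnegative integers $a,b$; that is, $\mathbb E(f_i^a\overline{f_i}^b)\to\mathbb E(f^a\overline{f}^b)$, where the expectation is with respect to the Haar measure on whatever group $f_i$ (resp.\ $f$) lives on.

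The plan is therefore to reduce the convergence of these mixed moments to a statement about Fourier coefficients, where the metric $d$ (and hence $d'$) is actually defined. First I would observe that $\mathbb E(f_i^a\overline{f_i}^b)$ is the constant Fourier coefficient $\widehat{f_i^a\overline{f_i}^b}(0)$, and that this coefficient can be written as an absolutely convergent sum of products of Fourier coefficients of $f_i$: namely $\widehat{f_i^a\overline{f_i}^b}(0)=\sum \widehat{f_i}(\chi_1)\cdots\widehat{f_i}(\chi_a)\overline{\widehat{f_i}(\psi_1)}\cdots\overline{\widehat{f_i}(\psi_b)}$ where the sum runs over tuples of characters with $\chi_1+\cdots+\chi_a=\psi_1+\cdots+\psi_b$. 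Next I would split this sum into a main part, consisting of tuples where all the characters involved lie in a finite set of ``large'' Fourier coefficients, and a tail. The tightness hypothesis, $\|f_i\|_2\to\|f\|_2$, together with $\widehat{f_i}\to\widehat f$ being an $\epsilon$-isomorphism for small $\epsilon$, is exactly what controls the tail: $\epsilon$-closeness forces the large coefficients of $f_i$ and of $f$ to match up combinatorially (same additive relations among the supporting characters) and to be close in value, while the $\ell^2$-mass outside the large coefficients must go to $0$ for $f_i$ because it goes to $0$ for $f$ and the total $\ell^2$-mass is preserved in the limit. On the main (finite) part, the matching of additive relations guarantees that the same index tuples contribute for $f_i$ as for $f$ once $i$ is large, and the coefficient-wise convergence gives convergence of the finite sum term by term.

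The main obstacle is the tail estimate: in the metric $d$ alone, $\epsilon$-isomorphism gives no control on the $\ell^2$-mass carried by the ``small'' Fourier coefficients (indeed Theorem \ref{convcl} shows mass can leak away), so a priori the tail of the moment sum could contribute a nonvanishing amount; this is precisely why the theorem needs $d'$ rather than $d$. I would handle it by fixing $\eta>0$, choosing a finite set $F$ of characters capturing all but $\eta$ of $\|f\|_2^2$, transporting $F$ via the $\epsilon$-isomorphism to a finite set $F_i$ in $\hat G_i$, and using $\|f_i\|_2^2\to\|f\|_2^2$ to conclude $\sum_{\chi\notin F_i}|\widehat{f_i}(\chi)|^2\le 2\eta$ for large $i$. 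Then I would bound the tail of the moment sum by a Cauchy–Schwarz / triangle-inequality argument in terms of this $2\eta$ and the uniform bound $\|f_i\|_\infty\le C$ (which controls, e.g., $\|f_i\|_2$ and the relevant partial sums), getting a bound like $O_{a,b,C}(\sqrt\eta)$ uniformly in $i$; the same bound holds for $f$. Letting $i\to\infty$ and then $\eta\to0$ finishes the argument. The one slightly delicate point to get right is making the combinatorial matching of the additive relations among characters in $F$ precise, i.e.\ choosing $\epsilon$ small enough (depending on $a+b$ and on the minimum gap between values $|\widehat f(\chi)|$ for $\chi\in F$) that the weight-$\lceil 1/\epsilon\rceil$ partial isomorphism sees all the relations $\chi_1+\cdots+\chi_a=\psi_1+\cdots+\psi_b$ that can occur among at most $a+b$ elements of $F$; this is routine once the quantifiers are ordered correctly.
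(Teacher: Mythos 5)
Your route is genuinely different from the paper's (which embeds everything into an ultraproduct $\bA$, notes that tightness forces $\lim_\omega f_i$ to equal its projection onto the Fourier $\sigma$-algebra, and concludes that $\lim_\omega f_i$ is a measure-preserving pullback of the limit function, so the distributions coincide). Your reduction to mixed moments, the matching of the ``large'' Fourier coefficients via the partial isomorphism, and the use of tightness to force $\sum_{\chi\notin F_i}|\widehat{f_i}(\chi)|^2\le 2\eta$ are all sound, and you correctly locate where $d'$ rather than $d$ is needed. But there is a genuine gap in the tail estimate for moments of order $a+b\ge 3$, and it is not the point you flag as delicate. First, the sum $\sum\widehat{f_i}(\chi_1)\cdots\overline{\widehat{f_i}(\psi_b)}$ over tuples with $\chi_1+\cdots+\chi_a=\psi_1+\cdots+\psi_b$ is \emph{not} absolutely convergent in general when $a+b\ge 3$ (the coefficients are only in $\ell^2$, and e.g.\ for $a=3$ the naive bound is $\bigl(\sum_{\chi}|\widehat{f_i}(\chi)|\bigr)\cdot\|\widehat{f_i}\|_2^2$, with an uncontrolled $\ell^1$ factor), so ``splitting off the tail'' of this sum is not yet meaningful. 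Second, and more seriously, the quantity you actually need to bound is $\mathbb{E}(f_i^a\overline{f_i}^b)-\mathbb{E}(f_{i,F_i}^a\overline{f_{i,F_i}}^b)$ where $f_{i,F_i}$ is the sharp truncation to the finite frequency set $F_i$; telescoping and Cauchy--Schwarz reduce this to $\|f_i-f_{i,F_i}\|_2$ times norms like $\|f_{i,F_i}\|_{2(a+b-1)}^{a+b-1}$, and a sharp cutoff to an \emph{arbitrary} finite set of characters is not a uniformly bounded operator on $L^\infty$ or on $L^p$ for $p>2$: the only generic bound is $\|f_{i,F_i}\|_\infty\le |F_i|^{1/2}\|f_i\|_2$. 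Since $|F|$ must grow as $\eta\to 0$, the advertised bound $O_{a,b,C}(\sqrt\eta)$ uniform in $i$ does not follow; the uniform bound $\|f_i\|_\infty\le C$ simply does not transfer to the truncations.

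The natural repair on the limit side is to replace the sharp truncation by $\mathbb{E}(f\,|\,\mathcal{B})$ for the $\sigma$-algebra $\mathcal{B}$ generated by the subgroup $\langle F\rangle$ of the dual; this keeps $\|\cdot\|_\infty\le C$ and loses no more $L^2$ mass than the sharp truncation. But then the ``main term'' involves the whole (generally infinite) subgroup $\langle F\rangle$, and a partial isomorphism of weight $\lceil 1/\epsilon\rceil$ defined on $\supp_\epsilon$ transports the finite set $F$ and its short relations, not the subgroup it generates nor the conditional expectation onto it (already the order of a single character need not be preserved for fixed $\epsilon$). So the fix reintroduces exactly the transport problem your scheme was designed to avoid. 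This is why the paper's proof works at the ultraproduct level: there the statement ``no $L^2$ mass escapes the span of the characters'' is literally the identity $f=\mathbb{E}(f|\mathcal{F}(\bA))$, and Theorem \ref{factor} then hands you a single measure-preserving homomorphism identifying the distribution of $f$ with that of the limit object, with no truncation or moment computation at all. As it stands, your plan proves convergence of the first and second moments but not of the higher ones, so it does not yet yield weak convergence.
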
  

Note that the above theorem is not true for convergence in $d$.
A trivial example for a tightly convergent sequence is an $L^2$-convergent sequence of functions on a fixed compact abelian group $A$.
However there are more interesting examples. We finish this chapter with an example which shows that a sequence of $L^2$ functions on the circle group $\mathbb{R}/\mathbb{Z}$ can have a limit (even in $d'$) which can not be defined on the circle group. The limit object exists on the torus.
Let $f_n(x)=e^{2i\pi x}+e^{2in\pi x}$ defined on $\mathbb{R}/\mathbb{Z}$ for $n\in\mathbb{N}$.
It is easy to see that $f_n$ is convergent and the limit is the function $f=e^{2i\pi x}+e^{2i\pi y}$ on the torus $\mathbb{R}/\mathbb{Z}\times\mathbb{R}/\mathbb{Z}$. Note that the sequence $f_n$ is tightly convergent since $\|f_n\|_2=\|f\|_2=\sqrt{2}$.

\section{Densities of linear configurations in functions on Abelian groups}

A linear form is a homogeneous linear multivariate polynomial with coefficients in $\mathbb{Z}$. If $L=\lambda_1x_1+\lambda_2x_2+\dots+\lambda_nx_n$ is a linear form then we can evaluate it in an arbitrary abelian group $A$ by giving values from $A$ to the variables $x_i$ and thus it becomes a function of the form $L:A^n\rightarrow  A$. A system $L_1,L_2,\dots,L_k$ of linear forms determines a type of linear configuration. An example for a linear configuration is the $3$-term arithmetic progression which is encoded by the linear forms $x_1,~x_1+x_2,~x_1+2x_2$. Assume that $A$ is a compact abelian group and $\mathcal{F}=\{f_i\}_{i=1}^k$ is a system of bounded measurable functions in $L^\infty(A)$. Assume furthermore that  $\mathcal{L}=\{L_1,L_2,\dots,L_k\}$ is a sytem of linear forms in $\mathbb{Z}(x_1,x_2,\dots,x_n)$. Then it is usual to define the density of the configuration $\mathcal{L}$ in $\mathcal{F}$ by the formula

\begin{equation}\label{confdens}
t(\mathcal{L},\mathcal{F}):=\mathbb{E}_{x_1,x_2,\dots,x_n\in A}\prod_{i=1}^k f_i(L_i(x_1,x_2,\dots,x_n)).
\end{equation}

If $f_i=f$ for every $1\leq i\leq k$ in the function system $\mathcal{F}$ then we use the notation $t(\mathcal{L},f)$ for $t(\mathcal{L},\mathcal{F})$.

In this chapter we address the following type of problem.

\medskip

{\it  Assume that $\mathcal{L}=\{L_1,L_2,\dots,L_k\}$ is a linear configuration and $\mathcal{A}$ is a class of compact abelian groups. Under what conditions on $\mathcal{L}$ and $\mathcal{A}$ is the function $f\mapsto t(\mathcal{L},f)$  continuous in the metric $d$ when functions are assumed to be uniformly bounded measurable functions on groups in $\mathcal{A}$ ?}

\medskip

The role of the class $\mathcal{A}$ is to exclude certain degeneracies that occur for number theoretic reasons. For example the linear form $2x$ becomes degenerated on the elementary abelian group $(\mathbb{Z}/2\mathbb{Z})^m$. We will need the following definition introduced by Gowers and Wolf in a slightly different form in \cite{GW}.

\begin{definition}\label{truecomp} Let $\mathcal{L}=\{L_1,L_2,\dots,L_k\}$ be a linear configuration. The {\bf true complexity} of $\mathcal{L}$ in a class $\mathcal{A}$ of abelian groups is the smallest number $m\in\mathbb{N}$ with the following property. For every $\epsilon>0$ there exists $\delta>0$ such that if $A\in\mathcal{A}$ is any abelian group and $\mathcal{F}=\{f_i\}_{i=1}^k$ is a system of measurable functions with $|f_i|\leq 1$ and $\|f_j\|_{U_{m+1}}\leq\delta$ for some $j$ then $t(\mathcal{L},\mathcal{F})\leq\epsilon$.
\end{definition}

In the above definition $\|.\|_{U_{m+1}}$ denotes Gowers's $m+1$-th uniformity norm.
Our main theorem states is the following.

\begin{theorem}\label{denscont} Let $a>0$. Let $\mathcal{L}$ be a linear configuration and $\mathcal{A}$ be a family of compact abelian groups such that $\mathcal{L}$ has true complexity at most $1$ in $\mathcal{A}$. Then $f\rightarrow t(\mathcal{L},f)$ is continuous with respect to the metric $d$ for measurable functions $f\in L^\infty(A)$ with $A\in\mathcal{A}$ and $|f|\leq a$.
\end{theorem}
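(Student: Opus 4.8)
\medskip
\noindent{\it Proof idea.}\ The plan is to transfer the problem to the Fourier side, where the metric $d$ lives, then use the complexity--$1$ hypothesis to show that $t(\mathcal L,f)$ depends, up to arbitrarily small error, only on the boundedly many large Fourier coefficients of $f$ --- data that a small $d$--distance precisely controls. First I would record the Fourier expansion of the density: writing $L_i=\sum_j\lambda_{ij}x_j$, expanding each $f(L_i(x))$ into characters and using orthogonality of characters on the compact group $A$, one gets, for any bounded system $\mathcal F=(f_1,\dots,f_k)$ on $A$,
\[
t(\mathcal L,\mathcal F)=\sum_{(\chi_1,\dots,\chi_k)\in R}\ \prod_{i=1}^{k}\hat f_i(\chi_i),
\]
where $R=\{(\chi_1,\dots,\chi_k)\in\hat A^{\,k}:\sum_i\lambda_{ij}\chi_i=0\ \mbox{for all }1\le j\le n\}$, the sum being absolutely convergent (this, and the interchange of sum and integral, I would justify by approximating the $f_i$ by trigonometric polynomials in $L^{2k}$). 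Two features are relevant: $t(\mathcal L,\cdot)$ is multilinear in $(f_1,\dots,f_k)$, and each defining relation $\sum_i\lambda_{ij}\chi_i=0$ of $R$ has weight at most $w:=\max_j\sum_i|\lambda_{ij}|$, hence is preserved by every partial isomorphism of weight $\ge w$.

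Next comes the truncation, where true complexity $\le1$ enters. For $h\in L^\infty(A)$ with $|h|\le a$ and $\eta>0$ put $T^\eta(h)=\{\chi\in\hat A:|\hat h(\chi)|>\eta\}$ (of size $\le a^2/\eta^2$ by Parseval), and let $h^{(\eta)}:=\mathbb E[h\mid\sigma(T^\eta(h))]$ be the conditional expectation of $h$ onto the $\sigma$--algebra generated by the characters in $T^\eta(h)$. Then $|h^{(\eta)}|\le a$, so $h-h^{(\eta)}$ is bounded by $2a$, and every nonzero Fourier coefficient of $h-h^{(\eta)}$ has modulus $\le\eta$, whence $\|h-h^{(\eta)}\|_{U_2}^4\le(\sup_\chi|\widehat{h-h^{(\eta)}}(\chi)|)^2\,\|h-h^{(\eta)}\|_2^2\le\eta^2a^2$. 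By multilinearity, $t(\mathcal L,h)-t(\mathcal L,h^{(\eta)})$ telescopes into $k$ terms of the form $t(\mathcal L,(h^{(\eta)},\dots,h-h^{(\eta)},\dots,h))$; dividing all $k$ entries by $2a$ makes every entry bounded by $1$ and the distinguished one of $U_2$--norm at most $\frac12\sqrt{\eta/a}$, so each term is controlled by the true--complexity hypothesis (applicable since $A\in\mathcal A$). Because the rescaling constant $2a$ does not involve $\eta$, the target in the complexity definition is fixed and no circularity arises: for every $\tau>0$ there is $\eta_0=\eta_0(\tau,a,\mathcal L,\mathcal A)>0$ with $|t(\mathcal L,h)-t(\mathcal L,h^{(\eta)})|\le\tau$ for all $0<\eta\le\eta_0$, uniformly in $h$ and in $A\in\mathcal A$. (Conditioning, rather than simply cutting off the Fourier series --- which could produce sup--norm as large as $a^2/\eta$ --- is what keeps this bound uniform.)

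Now let $f_n\to f$ in $d$ with all $f_n$ and $f$ bounded by $a$ on groups of $\mathcal A$, and fix $\eta\le\eta_0(\tau)$. For large $n$ we have $d(f_n,f)<\epsilon_n$ with $\epsilon_n\to0$, that is, an $\epsilon_n$--isomorphism $\phi_n$ between $\hat f$ and $\hat f_n$; it matches the large coefficients of $\hat f$ and $\hat f_n$ to within $\epsilon_n$ and, having weight $\lceil1/\epsilon_n\rceil\ge w$, transports the relations defining $R$ in both directions. Set $g_n:=\mathbb E[f_n\mid\sigma(\phi_n(T^\eta(f)))]$. A short check shows $f_n-g_n$ has all Fourier coefficients of modulus $\le\eta+\epsilon_n$, hence $U_2$--norm $O(\sqrt\eta)$, so the telescoping argument above gives $|t(\mathcal L,f_n)-t(\mathcal L,g_n)|\le\tau$ for $n$ large. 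Granting that $t(\mathcal L,g_n)\to t(\mathcal L,f^{(\eta)})$ (see below), and using $|t(\mathcal L,f)-t(\mathcal L,f^{(\eta)})|\le\tau$ together with the triangle inequality, we obtain $\limsup_n|t(\mathcal L,f_n)-t(\mathcal L,f)|\le3\tau$; since $\tau$ is arbitrary, continuity follows.

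The step I expect to be the main obstacle is the one granted above: $t(\mathcal L,g_n)\to t(\mathcal L,f^{(\eta)})$. Morally this is immediate from the Fourier expansion, since $\phi_n$ matches the relevant coefficients and transports the relations of $R$; but the structured parts $g_n$ and $f^{(\eta)}$ need not be trigonometric polynomials --- their Fourier supports lie in finitely generated, possibly infinite, subgroups of the dual --- so the Fourier sums are infinite, and one must control their tails uniformly in $n$ (again via the complexity bound applied to the bounded functions $g_n$) and reconcile the cut--off $\eta$ used for $f$ with the $\eta\pm\epsilon_n$ it induces on $f_n$. This bookkeeping is what the ultra--product formalism developed later in the paper sidesteps: for the ultralimit $F$ of $\{f_n\}$ on $\bA=\prod_\omega A_n$ one has $t(\mathcal L,f_n)\to t(\mathcal L,F)$ along the ultrafilter by the defining property of the Loeb measure; true complexity $\le1$ and multilinearity force $t(\mathcal L,F)=t(\mathcal L,\mathbb E[F\mid\mathcal F(\bA)])$ because $F-\mathbb E[F\mid\mathcal F(\bA)]$ has vanishing $U_2$--norm; and $\mathbb E[F\mid\mathcal F(\bA)]$ is the limit object $f$.
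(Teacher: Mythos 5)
Your closing paragraph is essentially the paper's own proof: the paper disposes of this theorem in three lines by combining Proposition \ref{counting} (for an ultraproduct $\bA$ of groups in $\mathcal{A}$ and $F=\lim_\omega f_i$, complexity $\le 1$ forces $t(\mathcal{L},F)=t(\mathcal{L},\mathcal{Q}(F))$ because $F-\mathbb{E}(F|\mathcal{F}(\bA))$ has vanishing $U_2$-seminorm) with Corollary \ref{charconv2} (the $d$-limit of the $f_i$ is $\mathcal{Q}(F)$), and then noting that since this identifies $\lim_\omega t(\mathcal{L},f_i)$ for \emph{every} non-principal ultrafilter with the same number $t(\mathcal{L},f)$, every subsequential limit of the bounded sequence $t(\mathcal{L},f_i)$ equals $t(\mathcal{L},f)$. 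One small correction: $\mathbb{E}[F|\mathcal{F}(\bA)]$ lives on $\bA$; the limit object is its push-forward $\mathcal{Q}(F)$ through the factor map of Theorem \ref{factor}. Modulo that, this route is complete, and your first two paragraphs (the Fourier expansion over the relation set $R$, and the uniform estimate $|t(\mathcal{L},h)-t(\mathcal{L},h^{(\eta)})|\le\tau$ via conditional expectation, telescoping and the complexity hypothesis) are correct and are the finitary shadow of Proposition \ref{counting}.

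The finitary route that occupies the bulk of your write-up has a genuine gap, and it is exactly the step you flag: $t(\mathcal{L},g_n)\to t(\mathcal{L},f^{(\eta)})$. I want to stress that your proposed patch --- ``control the tails uniformly in $n$, again via the complexity bound applied to the bounded functions $g_n$'' --- does not close it; it reproduces the same difficulty one level down. The obstruction is structural. The only data the $\epsilon_n$-isomorphism transports is the finite list of Fourier coefficients above threshold together with their relations of weight $\le\lceil 1/\epsilon_n\rceil$, so any comparison of $t(\mathcal{L},g_n)$ with $t(\mathcal{L},f^{(\eta)})$ must eventually pass through functions with \emph{finite} Fourier support on the matched characters. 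But the finite-support truncation $h_\eta=\sum_{\chi\in T^\eta(h)}\hat h(\chi)\chi$ only satisfies $\|h_\eta\|_\infty\le a^2/\eta$ (as you note yourself), and the true-complexity hypothesis is purely qualitative ($\forall\epsilon\,\exists\delta$), so after rescaling by $a^2/\eta$ the error it yields is of the form $\epsilon(\delta(\eta))\cdot(a^2/\eta)^k$, which you cannot force to be small as $\eta\to 0$. The conditional expectation avoids the $L^\infty$ blow-up, but its Fourier support is the whole subgroup $\langle T^\eta\rangle$, generally infinite; its coefficients off $T^\eta$ and its relations of weight exceeding $\lceil 1/\epsilon_n\rceil$ are not controlled by the partial isomorphism, and indeed $\langle\phi_n(T^\eta(f))\rangle$ may satisfy high-weight relations that $\langle T^\eta(f)\rangle$ does not, creating extra terms in $R$. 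Whichever truncation you choose, one of the two ingredients you need (uniform boundedness for the complexity bound, or finiteness of the transported data) is lost. This is precisely the bookkeeping the ultraproduct formalism dissolves, which is why the paper argues that way; I would present the finitary computation only as motivation and let the last paragraph carry the proof.
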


\section{Ultra products and ultralimits}

Let $\omega$ be a non principal ultra filter on the natural numbers. Let $\{X_i\}_{i=1}^\infty$ be a sequnece of sets. For two elements $x=(x_1,x_2,\dots)$ and $y=(y_1,y_2,\dots)$ in the product $\prod_{i=1}^\infty X_i$ we say that $x\sim_\omega y$ if $\{i~|~x_i=y_i\}\in\omega$. It is well known that $\sim_\omega$ is an equivalence relation. The set $\prod_{\omega}X_i:=\bigl(\prod_{i=1}^\infty X_i\bigr)/\sim_\omega$ is called the {\it ultraproduct} of the sets $X_i$. 

Let $T$ be a compact Hausdorrf topological space and let $\{t_i\}_{i=1}^\infty$ be a sequence in $T$. The ultralimit $\lim_\omega t_i$ is the unique point $t$ in $T$ with the property that for every open set $U$ containing $t$ the set $\{i~|~t_i\in U\}$ is in $\omega$.
Let $\{f_i:X_i\rightarrow T\}_{i=1}^\infty$ be a sequence of functions. We define $f=\lim_\omega f_i$ as the function on $\prod_\omega X_i$ whose value on the equivalence class of $\{x_i\in X_i\}_{i=1}^\infty$ is $\lim_\omega f_i(x_i)$. 

Let $\{X_i,\mu_i\}_{i=1}^\infty$ be pairs where $X_i$ is a compact Hausdorff space and $\mu_i$ is a probability measure on the Borel sets of $X_i$.
We denote by $\bX$ the ultra product space $\prod_{\omega}X_i$.
The space $\bX$ has the following structures on it.

\medskip

\noindent{\it Strongly open sets:} ~We call a subset of $\bX$ strongly open if it is the ultra product of open sets $\{S_i\subset X_i\}_{i=1}^\infty$.

\medskip

\noindent{\it Open sets:}~We say that $S\subset \bX$ is open if it is a countable union of strongly open sets. Open sets on $\bX$ form a $\sigma$-topology. This is similar to a topology but it has the weaker axiom that only countable unions of open sets are required to be open. It can be proved that $\bX$ with this $\sigma$-topology is countably compact. This means that if $\bX$ is covered by countably many open sets then there is a finite sub-system which covers $\bX$.

\medskip

\noindent{\it Borel sets:} A subset of $\bX$ is called Borel if it is in the $\sigma$-algebra generated by strongly open sets. 

\medskip

\noindent{\it Ultra limit measure:}  If $S\subseteq \bX$ is a strongly open set of the form $S=\prod_\omega S_i$ then we define $\mu(S)$ as $\lim_\omega\mu_i(S_i)$. It is well known that $\mu$ extends as a probability measure to the $\sigma$-algebra of Borel sets on $\bX$. 

\medskip

\noindent{\it Ultra limit functions:} Let $T$ be a compact Hausdorff topological space. Let $\{f_i:X_i\rightarrow T\}_{i=1}^\infty$ be a sequence of Borel measurable functions. We call functions of the form $f=\lim_\omega f_i$ ultra limit functions. It is easy to see that ultra limit functions can always be modified on a $0$ measure set that they becomes measurable in the Borel $\sigma$-algebra on $\bX$. This means that ultra limit functions are automatically measurable in the completion of the Borel $\sigma$-algebra.

\medskip

\noindent{\it Measurable functions:} It is an important fact (see \cite{ESz}) that every bounded measurable function on $\bX$ is almost everywhere equal to some ultra limit function $f=\lim_\omega f_i$. 

\medskip

\noindent{\it Continuity:} A function $f:\bX\rightarrow T$ from $\bX$ to a topological space $T$ is called continuous if $f^{-1}(U)$ is open in $\bX$ for every open set in $T$. If $T$ is a compact Hausdorff topological space then $f$ is continuous if and only if it is the ultra limit of continuous functions $f_i:X_i\rightarrow T$. Furthermore the image of $\bX$ in a compact Hausdorff space $T$ under a continuous map is compact. 

\medskip

\section{The Fourier $\sigma$-algebra}

If $A$ is a compact Abelian group then linear characters are continuous homomrphisms of the form $\chi:A\rightarrow\mathcal{C}$ where $\mathcal{C}$ is the complex unit circle with multiplication as the group operation. Note that on compact abelian groups we typically use $+$ as the group operation. However if we think of $\mathcal{C}$ as a subset of $\mathbb{C}$ then we are forced to use multiplicativ notation. On the other hand, if we think of $\mathbb{C}$ as the group $\mathbb{R}/\mathbb{Z}$ then we are basically forced to use additive notation. 

Linear characters are forming the Fourier basis in $L^2(A)$. In particular linear characters generate the whole Borel $\sigma$-algebra on $A$. Assume now that $\bA=\prod_\omega A_i$ is the ultraproduct of compact abelian groups. Linear characters of $\bA$ can be similarly defined as for compact abelian groups. In this case we require them to be continuous in the $\sigma$-topology on $\bA$.

\begin{proposition}\label{rigidity} A function $\chi\in L^\infty(\bA)$ is a linear character if and only if $\chi=\lim_\omega \chi_i$ for some sequence $\{\chi_i\in L^\infty(A_i)\}_{i=1}^\infty$ of linear characters. 
\end{proposition}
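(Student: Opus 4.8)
The plan is to prove both directions of the equivalence, with the easy direction being that an ultralimit of linear characters is a linear character. If $\chi = \lim_\omega \chi_i$ with each $\chi_i : A_i \to \mathcal{C}$ a continuous homomorphism, then $\chi$ takes values in $\mathcal{C}$ (which is compact Hausdorff, so ultralimits exist there), $\chi$ is continuous in the $\sigma$-topology because it is an ultralimit of continuous functions into a compact Hausdorff space (by the continuity criterion recalled in the ultraproduct section), and $\chi$ is a homomorphism because the homomorphism identity $\chi(xy) = \chi(x)\chi(y)$ is an $\o$-limit of the identities $\chi_i(x_iy_i) = \chi_i(x_i)\chi_i(y_i)$, which hold for $\o$-almost all $i$ (indeed all $i$). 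Thus $\chi$ is a linear character of $\bA$.

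For the converse, suppose $\chi \in L^\infty(\bA)$ is a linear character, i.e. a $\sigma$-continuous homomorphism $\bA \to \mathcal{C}$. First I would invoke the fact (recalled in the ultraproduct section) that every bounded measurable function on $\bA$ is a.e.\ equal to some ultralimit function; but here I want more, namely a genuinely $\sigma$-continuous representative, so I would instead use the continuity criterion directly: a $\sigma$-continuous function into the compact Hausdorff space $\mathcal{C}$ is exactly an ultralimit $\chi = \lim_\omega g_i$ of continuous functions $g_i : A_i \to \mathcal{C}$. The problem is that the $g_i$ need not be homomorphisms. The key step is a \emph{rigidity} or \emph{stability} argument: since $\chi$ is an exact homomorphism, the homomorphism defect of $g_i$, measured e.g.\ by $\sup_{x,y \in A_i} |g_i(xy) - g_i(x)g_i(y)|$ or by an $L^2$-average version of it, must tend to $0$ along $\o$. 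Then one applies a stability theorem for approximate homomorphisms into the circle (of Hyers--Ulam type, or the compact-group version): any map $g_i : A_i \to \mathcal{C}$ with small homomorphism defect is uniformly close to an actual continuous linear character $\chi_i : A_i \to \mathcal{C}$. Replacing $g_i$ by $\chi_i$ changes the ultralimit by something that is $\o$-essentially $0$, so $\chi = \lim_\omega \chi_i$ with each $\chi_i$ a bona fide linear character.

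The main obstacle is making the defect-control and stability step precise. Two points need care. First, one must express $\sigma$-continuity of $\chi$ concretely enough to conclude that the defects $\d_i := \sup_{x,y}|g_i(xy)-g_i(x)g_i(y)|$ satisfy $\lim_\o \d_i = 0$; the subtlety is that $\sigma$-continuity of the ultralimit does not literally say the $g_i$ are ``more and more continuous'' pointwise, so one should argue via the group operation map $m : \bA \times \bA \to \bA$ (itself a $\sigma$-continuous map, an ultralimit of the $m_i$) and the fact that $\chi \circ m = (\chi \otimes \chi)$ as $\sigma$-continuous functions, hence as ultralimits they agree $\o$-a.e., which forces $\lim_\o \d_i = 0$ (possibly after passing from $\sup$ to an appropriate essential-sup over $A_i \times A_i$ with its Haar measure, to match the a.e.\ statement). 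Second, one needs the quantitative Hyers--Ulam stability: for every $\eta > 0$ there is $\d > 0$ such that, for \emph{every} compact abelian group $A$, any Borel $g : A \to \mathcal{C}$ with homomorphism defect $\le \d$ is within $\eta$ (in sup or in $L^2$) of some continuous linear character — a uniform-over-all-groups statement that is classical but worth citing carefully. Granting these, the diagonal/ultrafilter bookkeeping to pass from ``for each $\eta$, $\o$-most $i$ admit a good $\chi_i^{(\eta)}$'' to a single sequence $\chi_i$ with $\lim_\o \chi_i = \chi$ is routine.
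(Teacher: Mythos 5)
Your proposal follows essentially the same route as the paper: the paper's Lemma~\ref{rigid} is precisely the uniform (over all compact abelian groups) Hyers--Ulam-type stability statement you ask for --- proved there by a short averaging argument, taking a circle-valued expectation of $x\mapsto f(x+t)f^*(x)$ --- and the rest of your argument (write $\chi$ as an ultralimit of continuous $g_i$, show the homomorphism defect tends to $0$ along $\omega$, correct each $g_i$ to an exact character) matches the paper's proof. Your worry about sup versus a.e.\ control of the defect is unnecessary: since $\chi=\lim_\omega g_i$ and the identity $\chi(x+y)=\chi(x)\chi(y)$ hold pointwise everywhere on $\bA$, if the sup-defect of $g_i$ exceeded some fixed $\epsilon>0$ for $\omega$-many $i$, the witnessing pairs $(x_i,y_i)$ would assemble into a point of $\bA\times\bA$ violating the identity.
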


The proof of the lemma relies on a rigidity result saying that almost linear characters on compact groups can be corrected to proper characters.

\begin{lemma}\label{rigid} For every $\epsilon>0$ there is $\delta>0$ such that if $f:A\rightarrow\mathbb{C}$ is a continuous function on a compact abelian group $A$ with the property that $|f(x+a)f^*(x)-f(y+a)f^*(y)|\leq\delta$ , $||f(x)|-1|\leq\delta$ for every $x,y,a\in A$ and $|f(0)-1|\leq\delta$ then there is a character $\chi$ of $A$ such that $|\chi(x)-f(x)|\leq\epsilon$ holds for every $x\in A$.
\end{lemma}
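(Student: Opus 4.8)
\medskip

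\noindent{\it Proof idea.} Write $f^{*}(x)=\overline{f(x)}$ and let $f=\sum_{\chi\in\hat A}c_{\chi}\chi$ be the Fourier expansion of $f$ (only countably many $c_{\chi}$ are nonzero, and $\sum_{\chi}|c_{\chi}|^{2}=\|f\|_{2}^{2}\in[(1-\delta)^{2},(1+\delta)^{2}]$ by the hypothesis $\big||f(x)|-1\big|\le\delta$). Put $g(a):=\int_{A}f(x+a)f^{*}(x)\,d\mu(x)$. Integrating the first hypothesis over $y$ gives, for all $x,a\in A$, the pointwise estimate $|f(x+a)f^{*}(x)-g(a)|\le\delta$; and a routine computation expresses the Fourier coefficients of the function $x\mapsto f(x+a)f^{*}(x)$ through the $c_{\mu}$, the one at the trivial character being $g(a)=\sum_{\mu}|c_{\mu}|^{2}\mu(a)$ and the one at $\chi\neq 1$ being $\sum_{\mu}c_{\mu}\overline{c_{\mu\chi^{-1}}}\mu(a)$.

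The plan is to use $L^{2}$/averaging arguments only to locate one dominant Fourier mode, and then to return to the pointwise hypothesis to obtain an honest sup-norm bound. By Parseval the pointwise estimate above gives $\sum_{\chi\neq 1}\big|\sum_{\mu}c_{\mu}\overline{c_{\mu\chi^{-1}}}\mu(a)\big|^{2}\le\delta^{2}$ for every $a$; integrating over $a$ and using orthogonality of characters collapses the left-hand side to $\sum_{\mu\neq\nu}|c_{\mu}|^{2}|c_{\nu}|^{2}\le\delta^{2}$. Choosing $\chi_{0}$ with $|c_{\chi_{0}}|=\max_{\chi}|c_{\chi}|$ and using $\sum_{\mu}|c_{\mu}|^{4}\le|c_{\chi_{0}}|^{2}\|f\|_{2}^{2}$ together with $\sum_{\mu\neq\nu}|c_{\mu}|^{2}|c_{\nu}|^{2}=\|f\|_{2}^{4}-\sum_{\mu}|c_{\mu}|^{4}$, one gets $\|f\|_{2}^{2}-|c_{\chi_{0}}|^{2}\le\delta^{2}/\|f\|_{2}^{2}$. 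Hence both $\eta:=\sum_{\mu\neq\chi_{0}}|c_{\mu}|^{2}$ and $\big||c_{\chi_{0}}|^{2}-1\big|$ are $O(\delta)$.

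Finally, since $\big|g(a)-|c_{\chi_{0}}|^{2}\chi_{0}(a)\big|=\big|\sum_{\mu\neq\chi_{0}}|c_{\mu}|^{2}\mu(a)\big|\le\eta$ for all $a$, the pointwise estimate yields $\big|f(x+a)f^{*}(x)-|c_{\chi_{0}}|^{2}\chi_{0}(a)\big|\le\delta+\eta$ for all $x,a$. Specialising $x=0$ and using $|f(0)-1|\le\delta$ and $|f(a)|\le 1+\delta$ gives $|f(a)-\chi_{0}(a)|\le\delta+\eta+(1+\delta)\delta+\big||c_{\chi_{0}}|^{2}-1\big|$ for every $a$, and the right-hand side tends to $0$ as $\delta\to 0$; so choosing $\delta=\delta(\epsilon)$ small enough, the character $\chi:=\chi_{0}$ satisfies the conclusion. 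The one genuine subtlety is the tension between the $L^{\infty}$ conclusion and the $L^{2}$ nature of Parseval and of the integration over $a$: the resolution is to spend the averaged information solely on pinning down $\chi_{0}$ and on showing $|c_{\chi_{0}}|\approx 1$, and then to recover pointwise control by re-evaluating the original hypothesis at the test point $x=0$. (Alternatively one could observe that $g/|g|$ is an approximately multiplicative circle-valued function and invoke a Hyers--Ulam-type stability theorem for characters of compact abelian groups, but the Fourier-analytic route above is self-contained.)
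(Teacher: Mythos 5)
Your argument is correct, and it takes a genuinely different route from the paper. The paper's proof never touches Fourier coefficients: it observes that for fixed $t$ the values of $x\mapsto f(x+t)f^{*}(x)$ all lie in a short arc of the circle, and applies a ``circular mean'' (lift the arc to $\mathbb{R}$, average there, project back), which is \emph{exactly} multiplicative for variables confined to short arcs; the resulting $g(t)=\mathbb{E}_x\bigl(f(x+t)f^{*}(x)\bigr)$ is then an honest character that is pointwise close to $f$. That route is very short and produces the character by a single averaging step, at the cost of having to set up the arc-lifting formalism and check the multiplicativity of the circular mean. Your route instead identifies the character as the dominant Fourier mode $\chi_0$ of $f$: the chain Parseval in $x$, then integration in $a$ with orthogonality, correctly collapses the hypothesis to $\sum_{\mu\neq\nu}|c_\mu|^2|c_\nu|^2\le\delta^2$, which forces one coefficient to carry essentially all of $\|f\|_2^2$; and you are right that the only way to convert this $L^2$ information back into a sup-norm statement is to return to the pointwise hypothesis, which you do cleanly by specialising to $x=0$ and using $|f(0)-1|\le\delta$. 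I checked the computations (the coefficient of $x\mapsto f(x+a)f^{*}(x)$ at $\chi$ being $\sum_\mu c_\mu\overline{c_{\mu\chi^{-1}}}\mu(a)$, the bound $\eta\le\delta^2/\|f\|_2^2$, and the final telescoping estimate) and they all go through; note also that $\max_\chi|c_\chi|$ is attained since the coefficients are square-summable. What your approach buys is that it is entirely standard $L^2$ harmonic analysis with no auxiliary construction, and it yields the extra information that the approximating character is the top Fourier mode of $f$; what the paper's approach buys is brevity and independence from Fourier duality, which is why it adapts more directly to the ultraproduct setting where it is used.
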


\begin{proof} As a tool we introduce group theoretic expected values of random variables taking values in $\mathcal{C}$. Let $l$ denote the arc length metric on the circle group $\mathcal{C}\simeq\mathbb{R}/\mathbb{Z}$ normalized by the total length $2\pi$. It is clear that the metric $l$ is topologically equivalent with the complex metric $|x-y|$ on $\mathcal{C}$.  Assume that a random variable $X$ takes its values in an arc of the circle group of length $1/3$. Then there is a lift $Y$ of $X$ to $\mathbb{R}$ such that $Y+\mathbb{Z}=X$ and $Y$ takes its values in an interval of length $1/3$. The lift $Y$ with this property is unique up to an integer shift. Then we define $\mathbb{E}(X)\in\mathbb{R}/\mathbb{Z}$ as $\mathbb{E}(Y)+\mathbb{Z}$. Switching to multiplicative notation in $\mathcal{C}$ this expected value satisfies $\mathbb{E}(X_1X_2)=\mathbb{E}(X_1)\mathbb{E}(X_2)$ where $X_1,X_2$ take values in an arc of length $1/6$.

Let us define $f_2(x)=f(x)/|f(x)|$. If $\delta<1$ then $f(x)\neq 0$ on $A$ and thus $f_2$ is defined on $A$. If $\delta>0$ is small enough then for every fixed $t$ the function $x\mapsto f(x+t)f^*(x)$ takes values in an arc of length at most $1/6$. For every $t\in A$ let $g(t)=\mathbb{E}_x(f(x+t)f^*(x))$ where $\mathbb{E}$ is the group theoretic expected value. If $\delta$ is small enough then $|g(t)-f(t)|\leq\epsilon$ holds for every $t\in A$ because $|f(x+t)f^*(x)-f(t)f^*(0)|\leq\delta$ and $f(0)$ is close to $1$.  Using our multiplicativity property of $\mathbb{E}$ we have for every pair $a,b\in A$ that
$$g(a+b)g^*(b)=\mathbb{E}_x(f(x+a+b)f^*(x)f^*(x+b)f(x))=\mathbb{E}_x(f(x+a+b)f^*(x+b))=$$
$$=\mathbb{E}_x((x+a)f^*(x))=g(a).$$ This implies that $g$ is a linear character of $A$.  
\end{proof}

Now we are ready to prove proposition \ref{rigidity}

\begin{proof}     The continuity of $\chi$ guarantees that $\chi=\lim_\omega f_i$ for some sequence of continuous functions $f_i$ on $A_i$. The fact that $\chi$ is a character implies that there is a sequence $\delta_i$ such that $f_i$ satisfies the conditions of lemma \ref{rigid} with $\delta_i$ for every $i$ and $\lim_\omega\delta_i=0$. It follows by lemma \ref{rigid} that there is a sequence of linear characters $\chi_i$ on $A_i$ such that $\lim_\omega\max(|\chi_i-f_i|)=0$. Thus we have that $\lim_\omega \chi_i=\lim_\omega f_i=\chi$.
\end{proof}

\medskip

Proposition \ref{rigidity} implies that the set of linear characters of $\bA$ (also as a group) is equal to $\prod_\omega\hat{A_i}$. We denote this set by $\hat{\bA}$. If $f\in L^2(\bA)$ then the Fourier transform of $f$ on $\bA$ is the function $\hat{f}\in l^2(\hat{\bA})$ defined by $\hat{f}(\chi)=(f,\chi)$. If $f=\lim_\omega f_i$ then we have that $\hat{f}=\lim_\omega \hat{f_i}$. 

 It was observed in \cite{Sz1} that linear characters of $\bA$ no longer span $L^2(\bA)$. This shows that in general we only have $\|\hat{f}\|_2\leq\|f\|_2$ instead of equality.  Furthermore the $\sigma$-algebra $\mathcal{F}(\bA)$ generated by linear characters on $\bA$ is smaller than the whole ultraproduct $\sigma$-algebra on $\bA$. (The only exception is the case when $\bA$ is a finite group. This can happen if the groups $A_i$ are finite and there is a uniform bound on their size.)

We call $\mathcal{F}(\bA)$ the {\bf Fourier $\sigma$-algebra} on $\bA$. The fact that the Fourier $\sigma$-algebra is not the complete $\sigma$-algebra on $\bA$ gives rise to the interesting operation $f\mapsto\mathbb{E}(f|\mathcal{F}(\bA))$ that isolates the ``Fourier part'' of a function $f\in L^2(\bA)$. Using that linear characters of $\bA$ are closed with respect to multiplication we obtain that linear characters are forming a basis in $L^2(\mathcal{F}(\bA))$. This implies that if $f\in L^2(\bA)$ then $\hat{f}=\hat{g}$ where $g=\mathbb{E}(f|\mathcal{F}(\bA))$. Thus we have that $\|\hat{f}\|_2=\|\hat{g}\|_2=\|\mathbb{E}(f|\mathcal{F}(\bA))\|_2$. In particular $\|f\|_2=\|\hat{f}\|_2$ holds if and only if $f$ is measurable in $\mathcal{F}(\bA)$.

The Fourier $\sigma$-algebra has an elegant description in terms of the second Gowers norm $U_2$. Recall that the $U_2$ norm \cite{Gow},\cite{Gow2} of a function $f\in L^\infty(A)$ on a compact abelian group $A$ is defined by
\begin{equation}\label{u2}
\|f\|_{U_2}=\Bigl(\mathbb{E}_{x,a,b\in A} f(x)f(x+a)^*f(x+b)^*f(x+a+b)\Bigr)^{1/4}.
\end{equation}

The next lemma gives a description of the $U_2$-norm in terms of Fourier analysis.

\begin{lemma}\label{u2eq} If $f\in L^\infty(A)$ then $\|f\|_{U_2}=\|\hat{f}\|_4$ and thus $\|\hat{f}\|_\infty\leq \|f\|_{U_2}\leq (\|f\|_2\|\hat{f}\|_\infty)^{1/2}$. 
\end{lemma}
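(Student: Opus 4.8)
\emph{Proof proposal.} The plan is to rewrite $\|f\|_{U_2}^4$ as the squared $L^2$-norm of an autocorrelation function and then invoke Parseval's identity on the compact group $A$. Set $c(a):=\mathbb{E}_{x\in A}\,f(x)^*f(x+a)$ for $a\in A$; since $f\in L^\infty(A)$ and the Haar measure is a probability measure, $c$ is a bounded function on $A$, hence $c\in L^2(A)$.

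First I would show that $\|f\|_{U_2}^4=\mathbb{E}_{a\in A}|c(a)|^2$. Starting from formula (\ref{u2}), I group the four factors as $\bigl[f(x)f(x+a)^*\bigr]\bigl[f(x+b)^*f(x+a+b)\bigr]$ and integrate over $b$ first: the substitution $y=x+b$ gives $\mathbb{E}_b\bigl(f(x+b)^*f(x+a+b)\bigr)=\mathbb{E}_y\bigl(f(y)^*f(y+a)\bigr)=c(a)$, independently of $x$. Hence $\|f\|_{U_2}^4=\mathbb{E}_a\Bigl(c(a)\cdot\mathbb{E}_x\bigl(f(x)f(x+a)^*\bigr)\Bigr)=\mathbb{E}_a\bigl(c(a)\,c(a)^*\bigr)=\mathbb{E}_a|c(a)|^2$, using that $\mathbb{E}_x f(x)f(x+a)^*$ is the complex conjugate of $c(a)$. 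Every interchange of integrals here is legitimate because all integrands are bounded on a probability space.

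Next I would compute the Fourier coefficients of $c$. For a linear character $\chi$, writing $a=y-x$ and using $\chi(a)^*=\chi(x)\chi(y)^*$ together with $\hat f(\chi)=(f,\chi)=\mathbb{E}_x f(x)\chi(x)^*$, one gets
$$\hat{c}(\chi)=\mathbb{E}_a\,c(a)\chi(a)^*=\mathbb{E}_{x,a}\,f(x)^*f(x+a)\chi(a)^*=\mathbb{E}_{x,y}\,f(x)^*f(y)\chi(x)\chi(y)^*=\hat{f}(\chi)^*\,\hat{f}(\chi)=|\hat{f}(\chi)|^2.$$
Then Parseval applied to $c\in L^2(A)$ gives $\mathbb{E}_a|c(a)|^2=\|c\|_2^2=\sum_{\chi}|\hat{c}(\chi)|^2=\sum_{\chi}|\hat{f}(\chi)|^4=\|\hat{f}\|_4^4$. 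Combining with the previous step yields $\|f\|_{U_2}=\|\hat{f}\|_4$.

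Finally the two displayed inequalities follow immediately: $\|\hat{f}\|_\infty^4\le\sum_\chi|\hat{f}(\chi)|^4=\|\hat{f}\|_4^4$ gives $\|\hat{f}\|_\infty\le\|\hat{f}\|_4=\|f\|_{U_2}$, while $\|\hat{f}\|_4^4=\sum_\chi|\hat{f}(\chi)|^4\le\|\hat{f}\|_\infty^2\sum_\chi|\hat{f}(\chi)|^2=\|\hat{f}\|_\infty^2\,\|f\|_2^2$ (Parseval once more, since $\|\hat f\|_2=\|f\|_2$) gives $\|f\|_{U_2}=\|\hat{f}\|_4\le(\|f\|_2\|\hat{f}\|_\infty)^{1/2}$. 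I do not expect a genuine obstacle: the only points needing care are the two changes of variables in $\mathbb{E}_{x,a,b}$ and the remark that $c\in L^2(A)$ so that the classical Parseval identity applies. An alternative route, substituting the Fourier expansion of $f$ directly into (\ref{u2}) and collapsing the resulting character sums via orthogonality, also works but requires extra attention to convergence, which the autocorrelation argument avoids.
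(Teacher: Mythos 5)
Your proof is correct, and the paper in fact states Lemma \ref{u2eq} without proof, so there is nothing to compare against; your autocorrelation-plus-Parseval argument is the standard derivation of the identity $\|f\|_{U_2}=\|\hat f\|_4$ that the paper implicitly relies on. All the steps (Fubini for bounded functions on a probability space, translation invariance of Haar measure, $\hat c(\chi)=|\hat f(\chi)|^2$, and the elementary $\ell^\infty$--$\ell^4$--$\ell^2$ inequalities) check out.
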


One can define $\|f\|_{U_2}$ by the formula (\ref{u2}) for functions on ultraproduct groups. With this definition we have that $\|f\|_{U_2}=\lim_{\omega}\|f_i\|_{U_2}$ whenever $f=\lim_\omega f_i$. The main differnece from the compact case is that $\|.\|_{U_2}$ is no longer a norm for functions in $L^\infty(\bA)$. It is only a semi-norm. However the next lemma shows that $\|.\|_{U_2}$ is a norm when restricted to $L^\infty(\mathcal{F}(\bA))$ and that $\mathcal{F}(\bA)$ is the largest $\sigma$-algebra with this property.

\begin{lemma} If $g\in L^\infty(\bA)$ then $\|g\|_{U_2}=0$ if and only if $g$ is orthogonal to $L^2(\mathcal{F}(\bA))$. A function $f\in L^\infty(\bA)$ is measurable in $\mathcal{F}(\bA)$ if and only if $f$ is orthogonal to every function $g\in L^\infty(\bA)$ with $\|g\|_{U_2}=0$. In particular we have that $\|.\|_{U_2}$ is a norm on $L^\infty(\mathcal{F}(\bA))$.
\end{lemma}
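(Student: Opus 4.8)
The plan is to transport the Fourier-analytic estimate of Lemma~\ref{u2eq} from compact groups to $\bA$ by an ultralimit, and then to conclude with elementary Hilbert space arguments based on the fact that the linear characters of $\bA$ form a basis of $L^2(\cF(\bA))$. The key step is to establish, for every $g\in L^\infty(\bA)$, the chain
\[
\|\hat g\|_\infty\ \le\ \|g\|_{U_2}\ \le\ \bigl(\|g\|_2\,\|\hat g\|_\infty\bigr)^{1/2}.
\]
Since the $U_2$-seminorm, the $L^2$-norm, and the Fourier coefficients $\hat g(\chi)=(g,\chi)$ depend only on the a.e.\ class of $g$, and every bounded measurable function on $\bA$ is a.e.\ an ultralimit function, I may write $g=\lim_\omega g_i$ with $\sup_i\|g_i\|_\infty<\infty$. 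Lemma~\ref{u2eq} gives both inequalities for each $g_i$ on $A_i$, and I take $\lim_\omega$: I have $\|g\|_{U_2}=\lim_\omega\|g_i\|_{U_2}$ and $\|g\|_2^2=\lim_\omega\|g_i\|_2^2$ (because $|g|^2=\lim_\omega|g_i|^2$ and the ultralimit measure integrates ultralimits of uniformly bounded functions via $\lim_\omega$, cf.\ \cite{ESz}), so it remains to identify $\lim_\omega\|\hat g_i\|_\infty$ with $\|\hat g\|_\infty$. Using $\hat g=\lim_\omega\hat g_i$ on $\hat{\bA}=\prod_\omega\hat A_i$ one has $|\hat g([\chi_i])|=\lim_\omega|\hat g_i(\chi_i)|$, which yields $\|\hat g\|_\infty\le\lim_\omega\|\hat g_i\|_\infty$ immediately, and the reverse inequality by picking $\chi_i\in\hat A_i$ with $|\hat g_i(\chi_i)|$ within $\epsilon$ of $\|\hat g_i\|_\infty$ and forming $[\chi_i]\in\hat{\bA}$. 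In particular $\|g\|_{U_2}=0$ if and only if $\hat g\equiv 0$, i.e.\ $(g,\chi)=0$ for every linear character $\chi$ of $\bA$.

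Since the linear characters form a basis of $L^2(\cF(\bA))$, the condition $(g,\chi)=0$ for all $\chi$ is equivalent to $g\perp L^2(\cF(\bA))$ (equivalently $\mathbb E(g\mid\cF(\bA))=0$); combined with the previous step this is the first statement. The third statement follows at once: $\|.\|_{U_2}$ is already known to be a seminorm on $L^\infty(\bA)$, and if $f\in L^\infty(\cF(\bA))$ has $\|f\|_{U_2}=0$ then $f\perp L^2(\cF(\bA))$ while $f\in L^2(\cF(\bA))$, so $\|f\|_2^2=(f,f)=0$ and $f=0$.

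For the second statement, first suppose $f\in L^\infty(\bA)$ is $\cF(\bA)$-measurable; then $f\in L^2(\cF(\bA))$, so by the first statement $f\perp g$ for every $g\in L^\infty(\bA)$ with $\|g\|_{U_2}=0$. Conversely, suppose $f$ is orthogonal to all such $g$, and decompose $f=\mathbb E(f\mid\cF(\bA))+h$. Since conditional expectation is an $L^\infty$-contraction, $h$ is bounded, and $h\perp L^2(\cF(\bA))$, so by the first statement $\|h\|_{U_2}=0$. Hence $(f,h)=0$ by hypothesis; expanding and using $\mathbb E(f\mid\cF(\bA))\in L^2(\cF(\bA))\perp h$ gives $(f,h)=(h,h)=\|h\|_2^2$, whence $h=0$ and $f=\mathbb E(f\mid\cF(\bA))$ is $\cF(\bA)$-measurable.

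The main obstacle is the ultralimit passage in the first step, and within it the point that $\|\hat g\|_\infty$ — a supremum over the large index set $\hat{\bA}$ — genuinely equals $\lim_\omega\|\hat g_i\|_\infty$ rather than merely being bounded by it; once that identity and the termwise behaviour of the ultralimit measure are in place, everything else is routine Hilbert space bookkeeping.
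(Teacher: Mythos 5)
Your proof is correct and follows essentially the same route as the paper: both arguments rest on transporting Lemma~\ref{u2eq} through the ultralimit (using that every character of $\bA$ is $\lim_\omega\chi_i$ and that $\|g\|_{U_2}=\lim_\omega\|g_i\|_{U_2}$) to show $\|g\|_{U_2}=0$ iff $\hat g\equiv 0$ iff $g\perp L^2(\mathcal{F}(\bA))$, and then conclude via the decomposition $f=\mathbb{E}(f\mid\mathcal{F}(\bA))+h$. The only cosmetic difference is that you assemble the full inequality chain on $\bA$ before specializing, while the paper applies the termwise bounds separately in each direction.
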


\begin{proof} We can assume that $g=\lim_\omega g_i$ for some sequence of functions $\{g_i\in L^\infty(A_i)\}_{i=1}^\infty$ such that $\|g_i\|_\infty\leq\|g\|_\infty$ holds for every $i$.
Assume first that $\|g\|_{U_2}=0$. Let $\chi=\lim_\omega\chi_i$ be an ultralimit of linear characters. Using lemma \ref{u2eq} we have that $|(g_i,\chi_i)|\leq\|\hat{g}_i\|_\infty\leq\|g_i\|_{U_2}$ and thus $$|(g,\chi)|=\lim_\omega |(g_i,\chi_i)|\leq\lim_\omega\|g_i\|_{U_2}=\|g\|_{U_2}=0.$$ It follows that $g$ is orthogonal to the space $L^2(\mathcal{F}(\bA))$ spanned by linear characters of $\bA$. For the other direction assume that $g\neq 0$ is orthogonal $L^2(\mathcal{F}(\bA))$. For every $i$ we choose a linear character $\chi_i$ on $A_i$ such that $|(g_i,\chi_i)|=\|\hat{g}_i\|_\infty$. We have by lemma \ref{u2eq} and by $\|g_i\|_2\leq\|g_i\|_\infty\leq\|g\|_\infty$ that $|(g_i,\chi_i)|\geq\|g_i\|_{U_2}^2\|g\|_\infty^{-1}$. Then we have for $\chi=\lim_\omega \chi_i$ that $0=|(g,\chi)|\geq (\lim_\omega\|g_i\|_{U_2}^2)\|g\|_\infty^{-1}$. It follows that $\|g\|_{U_2}=0$.

To complete the proof assume that $f\in L^\infty(\bA)$ is orthogonal to every $g\in L^\infty(\bA)$ with $\|g\|_{U_2}=0$. Let $g:=f-\mathbb{E}(f|\mathcal{F}(\bA))\in L^\infty(\bA)$. Note that since $\mathbb{E}$ is an orthogonal projection it follows that $(f,g)=\|g\|_2^2$. We have that $g$ is orthogonal to $L^2(\mathcal{F}(\bA))$ and so $\|g\|_{U_2}=0$. It implies that $(f,g)=0$ but that is only possible if $g=0$ and $f=\mathbb{E}(f|\mathcal{F}(\bA))$.
\end{proof}

\medskip

Let $\hat{\mathcal{Q}}:L^2(\bA)\rightarrow\mathcal{M}$ be such that $\hat{\mathcal{Q}}(f)$ is the isomorphism class of $\hat{f}$ in $\mathcal{M}$.
Let furthermore $\mathcal{Q}(f)$ denote the isomorphism class in $\mathcal{H}$ representing the Fourier transform of $\hat{\mathcal{Q}}(f)$. Note that $\mathcal{Q}(f)=\mathcal{Q}(\mathbb{E}(f|\mathcal{F}(\bA)))$. We have that $\mathcal{Q}(f)$ can be represented as a measurable function on some second countable compact abalian group with $\|\mathcal{Q}(f)\|_2\leq\|f\|_2$ which in some sense imitates $f$. However it is not even clear from this definition that if $f$ is a bounded function then $\mathcal{Q}(f)$ is also bounded. The next theorem provides a structure theorem for functions in $L^\infty(\mathcal{F}(\bA))$ and describes $\mathcal{Q}(f)$.

\begin{theorem}\label{factor} A function $f\in L^\infty(\bA)$ is measurable in $\mathcal{F}(\bA)$ if and only if there is a continuous, surjective, measure preserving homomorphism $\phi:\bA\rightarrow A$  to some second countable compact abelian group $A$ and a function $h\in L^\infty(A)$ such that $f=h\circ\phi$ (up to $0$ measure change). Furthermore $d(h,\mathcal{Q}(f))=0$ implying that the isomorphism class of $h$ is $\mathcal{Q}(f)$.
\end{theorem}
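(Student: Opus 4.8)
The plan is to prove Theorem \ref{factor} by producing the group $A$ as a subgroup of an infinite torus built from linear characters, and then identifying the resulting quotient with $\mathcal{Q}(f)$ via the Fourier transform.

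\smallskip

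\noindent\textbf{Construction of $\phi$ and $A$.} Suppose $f\in L^\infty(\bA)$ is measurable in $\mathcal{F}(\bA)$. By the preceding lemmas, $L^2(\mathcal{F}(\bA))$ is spanned (as a Hilbert space) by the linear characters of $\bA$, which by Proposition \ref{rigidity} form the countable-rank group $\hat{\bA}=\prod_\omega\hat{A_i}$. Since $\bA$ is second countable in the relevant sense (the $A_i$ are second countable, so each $\hat A_i$ is countable and $\hat f\in l^2$ has countable support), only countably many characters are needed to generate the $\sigma$-algebra on which $f$ lives; let $\chi^{(1)},\chi^{(2)},\dots$ enumerate a countable subgroup $B\le\hat{\bA}$ that separates the fibers of $\mathbb{E}(\cdot|\sigma(\supp\hat f))$. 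Define $\Phi:\bA\to\mathcal{C}^{B}$ (or into $\mathcal{C}^{\mathbb N}$) by $\Phi(x)=(\chi(x))_{\chi\in B}$. Each coordinate is continuous in the $\sigma$-topology, so $\Phi$ is continuous; let $A:=\overline{\Phi(\bA)}$, a closed subgroup of a metrizable compact torus, hence itself a second countable compact abelian group, and let $\phi:\bA\to A$ be the corestriction of $\Phi$. That $\phi$ is surjective onto $A$ follows from compactness of the $\sigma$-topological image (continuous image of $\bA$ in a compact Hausdorff space is compact, as recorded in the ultraproduct section) together with density. That $\phi$ is measure preserving is the key analytic point: one pushes forward the ultralimit measure $\mu$ on $\bA$ to a Haar-like probability measure $\nu$ on $A$, and checks $\nu$ is translation invariant by testing against characters of $A$ — each character of $A$ pulls back to a character of $\bA$, which is an ultralimit of genuine characters $\chi_i$ on $A_i$, and $\int_{A_i}\chi_i\,d\mu_i$ is $1$ or $0$ according to whether $\chi_i$ is trivial; taking $\lim_\omega$ gives the same dichotomy for $\int_A(\cdot)\,d\nu$, so $\nu$ is Haar measure.

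\smallskip

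\noindent\textbf{Factoring $f$ through $\phi$.} Now I claim $f$ is $\phi^{-1}(\mathcal{B}_A)$-measurable up to null sets. Since the characters in $B$ generate $\mathcal{F}(\bA)$ as a $\sigma$-algebra modulo null sets (that is the content of "linear characters form a basis in $L^2(\mathcal{F}(\bA))$" applied to the countable generating set), and each such character factors through $\phi$, the $\sigma$-algebra $\mathcal{F}(\bA)$ is contained in the $\mu$-completion of $\phi^{-1}(\mathcal{B}_A)$. As $f$ is $\mathcal{F}(\bA)$-measurable it therefore equals $h\circ\phi$ a.e. for some measurable $h$ on $(A,\nu)=(A,\mathrm{Haar})$, by the standard factorization (Doob–Dynkin) lemma; boundedness of $f$ gives $h\in L^\infty(A)$ with $\|h\|_\infty=\|f\|_\infty$. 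The converse direction — that $h\circ\phi$ is always $\mathcal{F}(\bA)$-measurable — is immediate since characters of $A$ pull back to characters of $\bA$ and these generate the range $\sigma$-algebra of $\phi$.

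\smallskip

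\noindent\textbf{Identifying $h$ with $\mathcal{Q}(f)$.} Finally, to show $d(h,\mathcal{Q}(f))=0$, i.e.\ $\hat h\cong\hat f$ in $\mathcal{M}$: a measure-preserving surjection $\phi:\bA\to A$ induces an injection $\hat\phi:\hat A\hookrightarrow\hat{\bA}$ of dual groups, and for a character $\eta\in\hat A$ one has $(h,\eta)_{L^2(A)}=(h\circ\phi,\eta\circ\phi)_{L^2(\bA)}=(f,\hat\phi(\eta))_{L^2(\bA)}=\hat f(\hat\phi(\eta))$, while $\hat f$ vanishes on characters of $\bA$ not in the image of $\hat\phi$ (any such character is orthogonal to $L^2$ of the $\sigma$-algebra generated by $B$, hence to $f$). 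Thus $\hat\phi$ carries $\supp\hat h$ onto $\supp\hat f$ and intertwines the two functions, and since $\hat\phi$ is a group isomorphism onto $\langle\hat f\rangle$, this is exactly an isomorphism $\langle\hat h\rangle\to\langle\hat f\rangle$ in the sense of Chapter \ref{disclim}. Hence $\hat h$ and $\hat f$ are isomorphic, so $d(h,\mathcal{Q}(f))=\hat d(\hat h,\hat f)=0$, and the isomorphism class of $h$ in $\mathcal{H}$ is $\mathcal{Q}(f)$.

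\smallskip

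\noindent\textbf{Main obstacle.} The technically delicate step is proving $\phi$ is measure preserving — i.e.\ that the pushforward of the ultralimit measure is genuinely Haar measure on the closed subgroup $A$, not merely some invariant-looking measure on a dense subset. This requires care with the interplay between the $\sigma$-topology on $\bA$, the countable compactness used to get surjectivity, and the characterization of Haar measure by vanishing of nontrivial character integrals; one must also make sure the countable generating family $B$ is chosen so that $A$ has the correct dual (namely that $\hat A$ is exactly the subgroup of $\hat{\bA}$ generated by $B$, which needs $B$ to be a subgroup, not just a subset).
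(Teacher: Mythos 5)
Your proof is correct and follows essentially the same route as the paper: embed $\bA$ into a countable power of the circle via the characters relevant to $f$, take the compact image as $A$, push the ultralimit measure forward and identify it as Haar measure, factor $f$ through $\phi$, and use injectivity of the dual map $\hat\phi$ to identify $\hat h$ with $\hat f$. The only slip is the claim that all of $\mathcal{F}(\bA)$ lies in the completion of $\phi^{-1}(\mathcal{B}_A)$ (false when $B\neq\hat{\bA}$), but you only need this for $f$ itself, which follows from $f$ lying in the closed $L^2$-span of the characters in $B$.
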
 

\begin{proof} Assume first that $f=h\circ\phi$ for some homomorphism $\phi$ and function $h$ as in the statement. 
Let $h=\sum_{i=1}^\infty\lambda_i\chi_i$ be the Fourier decomposition of $h$ converging in $L^2(A)$ where $\chi_i$ is a sequence of linear characters of $A$. We have that $\chi_i\circ\phi$ is a linear character of $\bA$ for every $i$. The measure preserving property of $\phi$ implies that $f=\sum_{i=1}^\infty\lambda_i(\chi_i\circ\phi)$ and thus $f$ is measurable in $\mathcal{F}(\bA)$. 

For the other direction assume that $f\in L^\infty(\mathcal{F}(\bA))$. Then $f=\sum_{i=1}^\infty a_i\chi_i$ for some (distinct) linear characters $\{\chi_i\}_{i=1}^\infty$ of $\bA$ where the convergence is in $L^2$ and $\|f\|_2^2=\sum_{i=1}^\infty |a_i|^2$.
Let us consider the homomorphism $\phi:\bA\rightarrow\mathcal{C}^\infty$ such that the $i$-th coordinate of $\phi(x)=\chi_i(x)$. 
Using the continuity of $\phi$ we have that the image $A$ of $\phi$ is a closed subgroup in $\mathcal{C}^\infty$. 
Let $\nu$ denote the Borel measure on $A$ satisfying $\nu(S)=\mu(\phi^{-1}(S))$ where $\mu$ is the ultralimit measure on $\bA$. The fact that $\phi$ is a homomorphism implies that $\nu$ is a group invariant Borel probability measure on $A$ and thus $\nu$ is equal to the normalized Haar measure. In other words $\phi$ is measure preserving with respect to the Haar measure on $A$.  

Let us denote by $\alpha_i$ the $i$-th coordinate function on $A$. It is clear that $\{\alpha_i\}_{i=1}^\infty$ is a system of linear characters of $A$. Since $\phi$ is surjective it induces an injective homomorphism $\hat{\phi}:\hat{A}\rightarrow\hat{\bA}$ defined by $\hat{\phi}(\chi)=\chi\circ \phi$ with the property that $\hat{\phi}(\alpha_i)=\chi_i$ holds for every $i$.  We have that $h=\sum_{i=1}^\infty a_i\alpha_i$ (which is defined up to a $0$ measure set on $A$) is convergent in $L^2$ and has the property that $f=h\circ \phi$ (up to a $0$ measure set). The fact that $\hat{\phi}$ is an injective homomorphism implies that $\hat{d}(\hat{h},\hat{f})=0$ and thus $d(h,\mathcal{Q}(f))=0$.
\end{proof}

If $\mathcal{L}$ is a system of linear forms and $f\in L^\infty(\bA)$ then we can define $t(\mathcal{L},f)$ by the formula (\ref{confdens}) using the ultralimit measure on $\bA$.

\begin{proposition}\label{counting} Let $f\in L^\infty(\mathcal{F}(\bA))$ and let $\mathcal{L}$ be a system of linear forms. Then $t(\mathcal{L},f)=t(\mathcal{L},\mathcal{Q}(f))$. Furthermore if $\mathcal{L}$ has complexity $1$ in a family $\mathcal{A}$ of compact abelian groups, $\bA$ is an ultraproduct of groups in $\mathcal{A}$ and $f\in L^\infty(\bA)$ then $t(\mathcal{L},f)=t(\mathcal{L},\mathcal{Q}(f))$.
\end{proposition}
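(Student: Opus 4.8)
The plan is to prove the two assertions in turn, the second using the first. \emph{First assertion.} Let $f\in L^\infty(\mathcal{F}(\bA))$. By Theorem \ref{factor} there is a continuous, surjective, measure preserving homomorphism $\phi:\bA\to A$ onto a second countable compact abelian group $A$ and a function $h\in L^\infty(A)$ with $f=h\circ\phi$ almost everywhere, the isomorphism class of $h$ being $\mathcal{Q}(f)$. Write $\mathcal{L}=\{L_1,\dots,L_k\}$ with the $L_j$ in $n$ variables. Since a linear form over $\mathbb{Z}$ commutes with group homomorphisms, $f(L_j(x_1,\dots,x_n))=h\bigl(L_j(\phi(x_1),\dots,\phi(x_n))\bigr)$ for all $x_1,\dots,x_n\in\bA$, so $t(\mathcal{L},f)$ is the integral of $\prod_j h\bigl(L_j(\phi(x_1),\dots,\phi(x_n))\bigr)$ over $\bA^n$ against the product ultralimit measure. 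The $n$-fold product $\phi^{\times n}:\bA^n\to A^n$ is a continuous surjective homomorphism and the product ultralimit measure on $\bA^n$ is translation invariant, so its pushforward under $\phi^{\times n}$ is a translation invariant Borel probability measure on $A^n$, hence the normalized Haar measure. Therefore $t(\mathcal{L},f)=\mathbb{E}_{y_1,\dots,y_n\in A}\prod_j h(L_j(y_1,\dots,y_n))=t(\mathcal{L},h)=t(\mathcal{L},\mathcal{Q}(f))$; the same pushforward computation applied to the common quotient in the definition of isomorphism shows $t(\mathcal{L},\cdot)$ is well defined on isomorphism classes, so $t(\mathcal{L},\mathcal{Q}(f))$ is meaningful.

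\emph{Second assertion.} Assume $\mathcal{L}$ has true complexity at most $1$ in $\mathcal{A}$, $\bA=\prod_\omega A_i$ with $A_i\in\mathcal{A}$, and $f\in L^\infty(\bA)$; after rescaling we may assume $|f|\le 1$. Put $g=\mathbb{E}(f\mid\mathcal{F}(\bA))$ and $h=f-g$, so $|g|\le 1$, $|h|\le 2$, $h$ is orthogonal to $L^2(\mathcal{F}(\bA))$ whence $\|h\|_{U_2}=0$, and $\mathcal{Q}(f)=\mathcal{Q}(g)$. Expanding $t(\mathcal{L},f)=\mathbb{E}_x\prod_j(g+h)(L_j(x))$ by multilinearity gives $t(\mathcal{L},f)=\sum_{S\subseteq\{1,\dots,k\}}t(\mathcal{L},\mathcal{F}_S)$, where $\mathcal{F}_S$ has $h$ in the slots indexed by $S$ and $g$ elsewhere. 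The term $S=\emptyset$ equals $t(\mathcal{L},g)$, which by the first assertion is $t(\mathcal{L},\mathcal{Q}(g))=t(\mathcal{L},\mathcal{Q}(f))$, so it remains to show $t(\mathcal{L},\mathcal{F}_S)=0$ for every nonempty $S$.

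Fix $S\ne\emptyset$. By multilinearity $t(\mathcal{L},\mathcal{F}_S)=2^{|S|}t(\mathcal{L},\mathcal{F}'_S)$, where $\mathcal{F}'_S$ is obtained by replacing $h$ with $h/2$; now every entry of $\mathcal{F}'_S$ is bounded by $1$ and at least one entry (those indexed by $S$) still has vanishing $U_2$-norm. Choose representing sequences $g=\lim_\omega g_i$, $h/2=\lim_\omega h_i$ with $|g_i|\le 1$, $|h_i|\le 1$, and let $\mathcal{F}^{(i)}_S$ be the corresponding system on $A_i$. Then $\|h/2\|_{U_2}=\lim_\omega\|h_i\|_{U_2}$ and $t(\mathcal{L},\mathcal{F}'_S)=\lim_\omega t(\mathcal{L},\mathcal{F}^{(i)}_S)$, the latter because the configuration density on $\bA$ is the ultralimit of the densities on the $A_i$ via the Fubini property of the ultralimit measure under $\bA^n\cong\prod_\omega A_i^n$. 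Given $\epsilon>0$ pick $\delta>0$ witnessing true complexity $\le 1$ for $\mathcal{L}$ in $\mathcal{A}$; since $\lim_\omega\|h_i\|_{U_2}=0$, for $\omega$-almost every $i$ the system $\mathcal{F}^{(i)}_S$ on $A_i\in\mathcal{A}$ has all entries bounded by $1$ and some entry of $U_2$-norm $\le\delta$, hence $|t(\mathcal{L},\mathcal{F}^{(i)}_S)|\le\epsilon$; passing to the ultralimit, $|t(\mathcal{L},\mathcal{F}'_S)|\le\epsilon$, and as $\epsilon$ was arbitrary $t(\mathcal{L},\mathcal{F}'_S)=0$. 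Thus $t(\mathcal{L},f)=t(\mathcal{L},g)=t(\mathcal{L},\mathcal{Q}(f))$.

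The step I expect to be the main obstacle is this transfer to the finite groups: one must verify carefully that both the $U_2$-norm and the configuration density on $\bA$ are genuinely the $\omega$-ultralimits of their finite-group counterparts (for the density this rests on the Fubini property of the ultralimit measure over the identification $\bA^n\cong\prod_\omega A_i^n$), and that the representing sequences can be chosen with the same sup-norm bounds so that the hypothesis $|f_i|\le 1$ in the definition of true complexity is literally satisfied — the rescaling $h\mapsto h/2$ is precisely what makes this possible. Everything else is routine bookkeeping with multilinearity, orthogonality and the pushforward of Haar measure.
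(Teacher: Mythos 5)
Your proof is correct and follows essentially the same route as the paper: the first assertion via Theorem \ref{factor} and the pushforward of Haar measure, and the second by reducing to $g=\mathbb{E}(f|\mathcal{F}(\bA))$ and transferring the vanishing of the $U_2$-norm of $f-g$ to the finite groups, where the definition of true complexity applies. Your multilinear expansion and the rescaling $h\mapsto h/2$ merely spell out the telescoping and normalization details that the paper's one-line appeal to complexity $1$ leaves implicit.
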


\begin{proof} For the first part we use theorem \ref{factor}. We get that $f=h\circ\phi$ for some measure preserving homomorphsim $\phi:\bA\rightarrow A$. It follows that $t(\mathcal{L},f)=t(\mathcal{L},h)=t(\mathcal{L},\mathcal{Q}(f))$. For the sencond part let $f=\lim_\omega f_i$ and $g=\mathbb{E}(f|\mathcal{F}(\bA))=\lim_\omega g_i$ for some functions with $\|f_i\|_\infty\leq\|f\|_\infty$ and $\|g_i\|_\infty\leq\|g\|_\infty$. We have that $\lim_\omega\|f_i-g_i\|_{U_2}=\|f-g\|_{U_2}=0$. Then using that $\mathcal{L}$ has complexity $1$ we obtain $t(\mathcal{L},\mathcal{Q}(f))=t(\mathcal{L},\mathcal{Q}(g))=t(\mathcal{L},g)=\lim_\omega t(\mathcal{L},g_i)=\lim_\omega t(\mathcal{L},f_i)=t(\mathcal{L},f)$.
\end{proof}

\section{The ultraproduct descriptions of $\hat{d}$ and $d$ convergence}

We give a simple and useful description of $\hat{d}$-convergence using ultrafilters. The price that we pay for the simplicity is that we don't get an explicit metric on $\mathcal{M}$, we only get the concept of convergence.

\begin{theorem}\label{charconv1} Let $a>0$. Assume that $\{f_i\}_{i=1}^\infty$ is a sequence in $\mathcal{M}_a$ that converges to $f$ in $\hat{d}$ then $f$ is isomorhic to $\lim_\omega f_i$ for every (non-principal) ultrafilter $\omega$. Consequently a sequence $\{f_i\}_{i=1}^\infty$ in $\mathcal{M}_a$ is convergent in $\hat{d}$ if and only if the isomorphism class of $\lim_\omega f_i$-limit doesn't depend on the choice of the ultra filter $\omega$.
\end{theorem}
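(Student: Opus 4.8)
The plan is to route everything through one lemma: \emph{for a sequence $\{f_i\}_{i=1}^\infty$ in $\mathcal{M}_a$, an element $g\in\mathcal{M}$, and a non-principal ultrafilter $\omega$, one has $\lim_\omega f_i\cong g$ if and only if $\lim_\omega\hat{d}(f_i,g)=0$.} Here $\lim_\omega f_i$ is the function $x\mapsto\lim_\omega f_i(x_i)$ on $\prod_\omega G_i$, which is well defined because $\|f_i\|_2\le a$ forces the values into the compact disc of radius $a$; a quick argument (if $\supp_\epsilon(\lim_\omega f_i)$ had more than $a^2/\epsilon^2$ elements one could pull this configuration back to some $G_i$ and contradict $\|f_i\|_2\le a$) shows $\lim_\omega f_i$ has countable support and $l^2$-norm $\le a$, hence lies in $\mathcal{M}_a$. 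Granting the lemma the theorem is immediate: if $f_i\to f$ in $\hat d$ then $\hat d(f_i,f)\to 0$ in the usual sense, so $\lim_\omega\hat d(f_i,f)=0$ and $\lim_\omega f_i\cong f$ for every $\omega$; conversely, if $\lim_\omega f_i$ has a single isomorphism class $F$ for every $\omega$ but $f_i\not\to F$, then $I=\{i:\hat d(f_i,F)\ge\epsilon\}$ is infinite for some $\epsilon>0$, and picking a non-principal $\omega\ni I$ the lemma gives $\lim_\omega\hat d(f_i,F)=0$, that is, $\{i:\hat d(f_i,F)<\epsilon\}\in\omega$, contradicting $I\in\omega$ as these sets are disjoint.

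For the ``if'' direction I would fix $\epsilon>0$, let $B=\{i:\hat d(f_i,g)<\epsilon/4\}\in\omega$, and choose an $(\epsilon/4)$-isomorphism $\phi_i$ between $g$ and $f_i$ for each $i\in B$. The one genuinely delicate point — and the step I expect to be the main obstacle — is that these $\phi_i$ have domains that vary with $i$, so an $\omega$-limit of them is not directly available; the remedy is to restrict every $\phi_i$ to the single finite set $D=\supp_{\epsilon/4}(g)$. A short estimate (any $h\in\supp_{\epsilon/2}(f_i)$ has a $\phi_i$-preimage $g'$ with $|g(g')|\ge|f_i(h)|-\epsilon/4>\epsilon/4$, hence $g'\in D$) shows $\phi_i|_D$ is still an $(\epsilon/2)$-isomorphism between $g$ and $f_i$, now with $i$-independent domain. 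Then I would set $\Phi=\lim_\omega(\phi_i|_D)\colon D\to\prod_\omega G_i$ and verify, using only finite intersections of sets of $\omega$ together with the pigeonhole principle for $\omega$, that $\Phi$ is injective, is a partial isomorphism of weight $\lceil 1/\epsilon\rceil$ (since each $\phi_i|_D$ has at least that weight), satisfies $\supp_\epsilon(\lim_\omega f_i)\subseteq\Phi(D)$, and obeys $|g(x)-(\lim_\omega f_i)(\Phi(x))|\le\epsilon$ on $D$. This gives $\hat d(g,\lim_\omega f_i)\le\epsilon$, and letting $\epsilon\to 0$ yields $\lim_\omega f_i\cong g$ by the fact that $\hat d$ is a metric (distance zero means isomorphic).

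For the ``only if'' direction I would set $F'=\lim_\omega f_i\cong g$ and, since $\hat d(f_i,g)=\hat d(f_i,F')$, show $\lim_\omega\hat d(f_i,F')=0$. Fix $\epsilon>0$, write $D=\supp_{\epsilon/2}(F')=\{y_1,\dots,y_m\}$, and pick representatives $y_j=(y_{j,i})_i$. Because $F'(y_j)=\lim_\omega f_i(y_{j,i})$, because the finitely many relations of weight $\lceil 1/\epsilon\rceil$ holding among the $y_j$ in $\prod_\omega G_i$ are precisely those holding among the $y_{j,i}$ for $\omega$-almost all $i$, and because a pigeonhole/ultrafilter argument forces $\supp_\epsilon(f_i)\subseteq\{y_{1,i},\dots,y_{m,i}\}$ for $\omega$-almost all $i$, the assignment $y_{j,i}\mapsto y_j$ is an $\epsilon$-isomorphism between $f_i$ and $F'$ for $\omega$-almost all $i$. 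Hence $\hat d(f_i,F')\le\epsilon$ for $\omega$-almost all $i$, so $\lim_\omega\hat d(f_i,F')\le\epsilon$, and as $\epsilon$ was arbitrary the lemma, and with it the theorem, follows. All steps other than the domain-uniformization in the ``if'' direction are routine manipulations with the defining property of a non-principal ultrafilter.
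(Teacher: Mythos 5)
Your proposal is correct, and for the main implication it uses the same underlying mechanism as the paper: pass near-isomorphisms to an ultralimit, using finiteness of $\supp_\epsilon$ (pigeonhole along the ultrafilter) to make the limiting map well defined. The differences are in the packaging, and they matter. The paper fixes $\epsilon_i$-isomorphisms $\alpha_i$ between $f_i$ and the limit $f$ with $\epsilon_i\to 0$ and builds in one stroke a map $\beta(h)=\lim_\omega\alpha_i(h_i)$ on all of $\supp(\lim_\omega f_i)$, a partial isomorphism of arbitrarily high weight, hence an isomorphism onto $f$; you instead work one $\epsilon$ at a time, produce an $\epsilon$-isomorphism between $g$ and $\lim_\omega f_i$ for each $\epsilon>0$, and finish by invoking the already-proved fact that $\hat{d}=0$ forces isomorphism. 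Both are sound; your version trades a single global limit map for the metric axiom, and your domain-uniformization step (restricting $\phi_i$ to the fixed finite set $D=\supp_{\epsilon/4}(g)$ and checking it remains an $(\epsilon/2)$-isomorphism) is exactly the right fix for the varying-domain issue and is carried out correctly. The genuine added value of your route is the reverse implication of your lemma ($\lim_\omega f_i\cong g$ implies $\lim_\omega\hat{d}(f_i,g)=0$, proved by projecting $\supp_{\epsilon/2}(\lim_\omega f_i)$ down to representatives in the $G_i$): this gives a direct, compactness-free proof of the ``only if'' half of the theorem's concluding equivalence, which the paper states with ``consequently'' but never argues explicitly (there it would have to be extracted from the compactness of $\mathcal{M}_a$ in Proposition 2.2 together with the first part). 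Your write-up is therefore slightly more complete than the paper's on that point.
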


\begin{proof} For every $i$ let $\alpha_i:T_i\rightarrow S_i$ be an $\epsilon_i$-isomorphism between $f_i$ and $f$ with  $T_i\subseteq G_i, S_i\subseteq G$ such that $\lim_{i\to\infty}\epsilon_i=0$. 
Assume that $\{h_i\}_{i=1}^\infty$ represents an element $h$ in $\prod_\omega G_i$ that is in $\supp(g)$ where $g=\lim_\omega f_i$. We have for some set $S\in\omega$ that $f_i(h_i)>g(h)/2$ and $\epsilon_i\leq g(h)/4$ for $i\in S$. It follows that $\alpha_i(h_i)\in\supp_{g(h)/4}(f)$ holds for every $i\in S$. Since $\supp_{g(h)/4}$ is finite we have that $\lim_\omega \alpha_i(h_i)$ exists and it is an element in $G$ that we denote by $\beta(h)$. The map $\beta:\supp(g)\rightarrow\supp(f)$ is a partial isomorphis of arbitrary high weight and so it extends to an isomorphism from $\langle g\rangle$ to $\langle f\rangle$.  It is clear that $\beta$ is also an isomorphism between $f$ and $g$.  
\end{proof}

\begin{corollary}\label{charconv2} Let $a>0$. Assume that $\{f_i\}_{i=1}^\infty$ is a sequence of functions with $f_i\in L^\infty(A_i)$ and $\|f_i\|_\infty\leq a$ for some sequence $\{A_i\}_{i=1}^\infty$ of compact abelian groups. If $\{f_i\}_{i=1}^\infty$ converges to $f\in\mathcal{H}_a$ in the metric $d$ then $f=\mathcal{Q}(\lim_\omega f_i)$ for an arbitrary (non-principal) ultrafilter $\omega$. 
\end{corollary}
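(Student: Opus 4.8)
The plan is to read the statement off Theorem \ref{charconv1} together with the definition $d(\cdot,\cdot)=\hat d(\hat{\,\cdot\,},\hat{\,\cdot\,})$ and the compatibility of the Fourier transform with ultralimits recorded just after Proposition \ref{rigidity}.

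First I would transport the hypotheses into the discrete ($\hat d$) setting. Since each $A_i$ is compact and carries the normalized Haar measure, $\|\hat f_i\|_2=\|f_i\|_2\le\|f_i\|_\infty\le a$, so $\{\hat f_i\}_{i=1}^\infty\subseteq\mathcal{M}_a$; likewise $\hat f\in\mathcal{M}_a$ because $f\in\mathcal{H}_a$. By the definition of $d$, the convergence $d(f_i,f)\to 0$ is the same as $\hat d(\hat f_i,\hat f)\to 0$, so Theorem \ref{charconv1} applies and tells us that for every non-principal ultrafilter $\omega$ the function $\hat f$ is isomorphic in $\mathcal{M}$ to $\lim_\omega\hat f_i$.

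Next I would identify $\lim_\omega\hat f_i$ with $\widehat F$, where $F:=\lim_\omega f_i\in L^\infty(\bA)$ and $\bA=\prod_\omega A_i$. Proposition \ref{rigidity} gives $\hat\bA=\prod_\omega\hat A_i$, and, as observed immediately after it, $\widehat F=\lim_\omega\hat f_i$ as $l^2$-functions on $\prod_\omega\hat A_i$. Combined with the previous paragraph, $\widehat F$ and $\hat f$ represent the same class in $\mathcal{M}$; that is, $\hat{\mathcal{Q}}(F)$ is exactly the isomorphism class of $\hat f$. Since $f_1\mapsto\hat f_1$ descends to a bijection between isomorphism classes (this is precisely what makes $d$ a metric on $\mathcal{H}$), and $\mathcal{Q}(F)\in\mathcal{H}$ is by definition the class whose Fourier transform is $\hat{\mathcal{Q}}(F)$, we conclude that $\mathcal{Q}(F)$ and $f$ are isomorphic in $\mathcal{H}$, i.e. $d(f,\mathcal{Q}(F))=\hat d(\hat f,\widehat{\mathcal{Q}(F)})=0$, hence $f=\mathcal{Q}(F)=\mathcal{Q}(\lim_\omega f_i)$. (Replacing $F$ by $\mathbb{E}(F\mid\mathcal{F}(\bA))$ changes nothing here, since $\mathcal{Q}(F)=\mathcal{Q}(\mathbb{E}(F\mid\mathcal{F}(\bA)))$.)

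There is no serious obstacle: the corollary is essentially a formal consequence of Theorem \ref{charconv1}. The only delicate point is the identification $\widehat{\lim_\omega f_i}=\lim_\omega\hat f_i$, which is exactly where the rigidity result (Proposition \ref{rigidity}) enters — it guarantees that the linear characters of $\bA$ are precisely the ultralimits of linear characters of the $A_i$, so that the ``pointwise'' Fourier transforms on the $\hat A_i$ assemble into the genuine Fourier transform of $F$ on $\bA$ rather than into something larger. Everything else is bookkeeping with the definitions of $\hat{\mathcal{Q}}$ and $\mathcal{Q}$.
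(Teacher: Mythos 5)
Your proposal is correct and follows essentially the same route as the paper: the paper's proof likewise sets $f'=\lim_\omega f_i$, notes that $\widehat{f'}=\lim_\omega\hat f_i$, and invokes Theorem \ref{charconv1} to get $\hat d(\widehat{f'},\hat f)=0$, hence $\mathcal{Q}(f')=f$. Your version merely spells out the bookkeeping (the $\mathcal{M}_a$ membership via $\|\hat f_i\|_2=\|f_i\|_2\le a$, and the passage from isomorphism of Fourier transforms back to $d(f,\mathcal{Q}(F))=0$) that the paper leaves implicit.
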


\begin{proof} Since the Fourier transform of $f'=\lim_\omega f_i$ is the ultra limit of the Fourier transforms of $f_i$ we have by theorem \ref{charconv1} that $\hat{d}(\hat{f'},\hat{f})=0$. It follows that $\mathcal{Q}(f')=f$.  
\end{proof}

\begin{corollary}\label{charconv3} Let $a>0$. Assume that $\{f_i\}_{i=1}^\infty$ is a convergent sequence of functions with $f_i\in L^\infty(A_i)$ and $\|f_i\|_\infty\leq a$ for some sequence $\{A_i\}_{i=1}^\infty$ of compact abelian groups. Then the limit $f$ of $\{f_i\}_{i=1}^\infty$ can be represented as a function $f:A\in L^\infty(A)$ where the dual group of $A$ is a subgroup in $\prod_{\omega}\hat{A_i}$.
\end{corollary}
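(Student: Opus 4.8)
The plan is to reduce the statement to Corollary \ref{charconv2} together with the structure theorem, Theorem \ref{factor}. Fix a non-principal ultrafilter $\omega$, set $\bA=\prod_\omega A_i$ and $f'=\lim_\omega f_i\in L^\infty(\bA)$ (which is bounded by $a$). By Corollary \ref{charconv2} the limit of $\{f_i\}_{i=1}^\infty$ in the metric $d$ equals $\mathcal{Q}(f')$, and by definition $\mathcal{Q}(f')=\mathcal{Q}(g)$ where $g:=\mathbb{E}(f'|\mathcal{F}(\bA))\in L^\infty(\mathcal{F}(\bA))$ (note $\|g\|_\infty\leq a$).

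Next I would apply Theorem \ref{factor} to $g$. It produces a second countable compact abelian group $A$, a continuous surjective measure-preserving homomorphism $\phi:\bA\to A$, and a function $h\in L^\infty(A)$ with $g=h\circ\phi$ up to a null set, and moreover $d(h,\mathcal{Q}(g))=0$. Thus $h$ is a representative of the limit of $\{f_i\}_{i=1}^\infty$ as a function in $L^\infty(A)$, which gives the first half of the corollary. It remains to locate $\hat A$ inside $\prod_\omega\hat A_i$. Since $\phi$ is continuous in the $\sigma$-topology, for every linear character $\chi$ of $A$ the pullback $\chi\circ\phi$ is a linear character of $\bA$; and since $\phi$ is surjective, the homomorphism $\hat\phi:\hat A\to\hat{\bA}$, $\chi\mapsto\chi\circ\phi$, is injective (a character of $A$ trivial on $\phi(\bA)=A$ is trivial). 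By Proposition \ref{rigidity} and the remark following it, $\hat{\bA}=\prod_\omega\hat A_i$, so $\hat\phi$ exhibits $\hat A$ as a subgroup of $\prod_\omega\hat A_i$, as claimed.

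The argument is essentially bookkeeping, given the machinery already in place. The only points that need a small amount of care are verifying that $\hat\phi$ actually lands in $\hat{\bA}$ — i.e.\ that characters of $A$ pull back to characters that are continuous in the $\sigma$-topology on $\bA$ — and that it is injective; both follow directly from the continuity and surjectivity of $\phi$ guaranteed by Theorem \ref{factor}. I do not anticipate a substantive obstacle beyond assembling these ingredients.
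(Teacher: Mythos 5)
Your argument is correct and follows the same route as the paper: apply Corollary \ref{charconv2} to identify the limit with $\mathcal{Q}(\lim_\omega f_i)$, then use Theorem \ref{factor} to realize it as $h\in L^\infty(A)$ with $\hat\phi:\hat A\to\hat{\bA}=\prod_\omega\hat A_i$ injective. You simply spell out the details (injectivity of $\hat\phi$ from surjectivity of $\phi$, continuity of the pullback characters) that the paper's terser proof leaves implicit.
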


\begin{proof} We have by corollary \ref{charconv2} that $f=\mathcal{Q}(\lim_\omega f_i)$. This means that $\hat{f}$ has an injective embedding into $\hat{\bA}$ where $\bA=\prod_\omega A_i$. By $\hat{\bA}=\prod_{\omega}\hat{A_i}$ the proof is complete.
\end{proof}

Corollary \ref{charconv3} gives a useful restriction on the structure of the group on which the limit function of a convergent seqence is defined. For example if $A_i$ are growing groups of prime order then the limit function is defined on a compact group whose dual group is torsion-free. On the other hand if $p$ is a fix prime and $f_i$ is defined on $\mathbb{Z}_p^i$ then the limit function is defined on the compact group $\mathbb{Z}_p^\infty$.

\section{Proofs of theorems \ref{convcl}, \ref{weakcl}, \ref{denscont} }

For the proofs of theorem \ref{convcl} and theorem \ref{weakcl} assume that $\{f_i\}_{i=1}^\infty$ is a convergent sequence in $\mathcal{H}(K)$ for some convex compact set $K\subseteq\mathbb{C}$. Corollary \ref{charconv2} implies that the limit is $\mathcal{Q}(f)$ where $f=\lim_\omega f_i$. Note that $f$ takes its values in $K$. We have that $\mathcal{Q}(f)=\mathcal{Q}(g)$ where $g=\mathbb{E}(f|\mathcal{F}(\bA))$. It follows by theorem \ref{factor} that $g=h\circ\phi$ for some measure preserving homomorphism $\phi:\bA\rightarrow A$ and the isomorphism class of $h$ is $\mathcal{Q}(g)$. Since $g$ is a projection of $f$ to a $\sigma$-algebra we have that $g$ (and thus $h$) takes its values in $K$. This completes the proof of theorem \ref{convcl}.

For the proof of theorem \ref{weakcl} assume that $f_i$ is tightly convergent and $K=\{x:x\in\mathbb{C},\|x\|\leq a\}$. Then, using the above notation we have that $\|g\|_2=\|h\|_2=\lim_{i\to\infty}\|f_i\|_2=\lim_\omega\|f_i\|_2=\|f\|_2$ where we use tightness in the second equality. This is only possibel if $f=g$ and thus $\mu_h=\mu_f=\lim_\omega \mu_{f_i}$ holds. Since this is true for every ultrafilter $\omega$ we obtain that $\lim_{i\to\infty}\mu_{f_i}=\mu_h$ holds with respect to weak convergence of measures.

To prove theorem \ref{denscont} assume that $\mathcal{L}$ has complexity $1$ and $f_i$ is a $d$ convergent sequence as above. Using the above notation and proposition \ref{counting} we have that $\lim_\omega t(\mathcal{L},f_i)=t(\mathcal{L},f)=t(\mathcal{L},\mathcal{Q}(f))$ where (using corollary \ref{charconv2}) $\mathcal{Q}(f)$ is equal to the $d$-limit of the sequence $\{f_i\}_{i=1}^\infty$. Since this is true for every ultrafilter $\omega$ the proof is complete.

\section{Proof of theorem \ref{rothext}}

For the proof of theorem \ref{rothext} we will need the next proposition which is interesting on its own right.

\begin{proposition}\label{visszahuz} Let $B$ be a compact abelian group with torsion-free dual group and let $f:B\rightarrow [0,1]$ be an arbitrary measurable function. Then there are subsets $S_p\subseteq\mathbb{Z}_p$ for every prime number $p$ such that the functions $1_{S_p}$ converge to $f$.
\end{proposition}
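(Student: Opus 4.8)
The goal is to realize a measurable $f\colon B\to[0,1]$ on a compact group $B$ with torsion-free dual as a limit of characteristic functions $1_{S_p}$ on $\mathbb Z_p$. The natural strategy is to combine corollary~\ref{charconv3} (which tells us what dual groups can occur as limits) with the structure theorem~\ref{factor} for the Fourier $\sigma$-algebra of an ultraproduct, and then to "pull back" $f$ along an explicit homomorphism. First I would reduce to the case where $f$ depends only on finitely many characters: since Fourier partial sums of $f$ converge in $L^2$ to $f$ and $d$ is (after passing to Fourier transforms) controlled by finitely many coordinates, it suffices to approximate each finite-dimensional projection $f'$ of $f$, defined on a quotient $B'$ of $B$ whose dual is a finitely generated torsion-free group, hence $\widehat{B'}\cong\mathbb Z^r$ and $B'\cong(\mathbb R/\mathbb Z)^r$. (One has to be mildly careful that the approximation is in the right metric; the point is that a small $L^2$-perturbation of $f$ combined with truncating the Fourier support below level $\epsilon$ changes $\hat f$ by an $\epsilon$-isomorphism's worth, so $d$-convergence of the approximants to $f$ follows from a diagonal argument.)

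Next, for $f'$ on the torus $(\mathbb R/\mathbb Z)^r$, I would produce the sets $S_p\subseteq\mathbb Z_p$ by choosing, for each prime $p$, an $r$-tuple $(a_1^{(p)},\dots,a_r^{(p)})$ of elements of $\mathbb Z_p$ that are "independent up to high order", i.e. the map $x\mapsto (a_1^{(p)}x,\dots,a_r^{(p)}x)/p$ equidistributes in $(\mathbb R/\mathbb Z)^r$ as $p\to\infty$ (a generic choice works, by Weyl), and then setting $S_p:=\{x\in\mathbb Z_p : f'(a_1^{(p)}x/p,\dots,a_r^{(p)}x/p)\ge \theta_x\}$ — or more cleanly, discretizing $f'$ by picking for each $p$ a $\{0,1\}$-valued function $1_{S_p}$ on $\mathbb Z_p$ whose nonzero Fourier coefficients approximate those of $f'$ at the characters corresponding to the tuple. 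Concretely: the characters of $\mathbb Z_p$ are $x\mapsto e^{2\pi i kx/p}$; I want $\widehat{1_{S_p}}(k_j^{(p)})\to \widehat{f'}(e_j)$ for a fixed finite set of target indices $j$, and all other "relevant" Fourier coefficients to stay small. This is exactly the kind of statement that a dense model / inverse theorem for the $U_2$ norm provides, but here it is cleaner to go through the ultraproduct: passing to $\mathbb A=\prod_\omega\mathbb Z_p$ and its Fourier $\sigma$-algebra, one wants to find $S_p$ so that $\lim_\omega 1_{S_p}$, viewed through $\mathcal Q$, equals the isomorphism class of $f'$.

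The cleanest route, and the one I would actually write, is to run the construction directly on the ultraproduct: realize $f'$ (equivalently its isomorphism class $\mathcal Q(g)=h$ via theorem~\ref{factor}) as $h\circ\phi$ for a measure-preserving $\phi\colon\mathbb A\to(\mathbb R/\mathbb Z)^r$, note that $\widehat{(\mathbb R/\mathbb Z)^r}=\mathbb Z^r$ embeds in $\widehat{\mathbb A}=\prod_\omega\widehat{\mathbb Z_p}$ precisely because the dual is torsion-free (no obstruction mod any prime), choose coordinate characters $\chi_j=\lim_\omega\chi_{j,p}$ realizing the generators, and then define $S_p$ by thresholding the explicit function $\sum_j \widehat{f'}(e_j)\chi_{j,p}(x)$ on $\mathbb Z_p$ — more precisely, $S_p$ is chosen so that $1_{S_p}$ has the same expectation and the same low-frequency Fourier data as $f'$ composed with the $p$-th coordinate of $\phi$. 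Since $\mathcal L_0:=\{x_1+x_2\}$ or rather the relevant configuration has complexity $1$, proposition~\ref{counting} and corollary~\ref{charconv2} guarantee that the only data surviving in the $d$-limit is this low-frequency Fourier data, so $\lim_\omega 1_{S_p}$ has $\mathcal Q$-image isomorphic to $f'$; running this for the sequence of finite-dimensional approximations $f'$ of $f$ and diagonalizing gives $1_{S_p}\to f$ in $d$. The main obstacle — and the place I would spend the most care — is the torsion-freeness hypothesis: it is exactly what is needed to embed $\widehat{B'}$ into $\prod_\omega\widehat{\mathbb Z_p}$ (a cyclic group of order $n$ sits in $\widehat{\mathbb Z_p}=\mathbb Z/p\mathbb Z$ only when $n\mid p$, which fails along $\omega$ unless $n=1$), so getting the "independent $r$-tuple mod $p$" to actually generate the right quotient, uniformly enough that the ultralimit homomorphism is surjective and measure-preserving, is the crux; everything else is equidistribution bookkeeping and the already-established machinery.
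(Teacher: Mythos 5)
Your overall architecture is essentially the paper's: embed the countable torsion-free dual $\hat B$ into $\hat{\bA}=\prod_\omega\hat{\mathbb{Z}_p}$, pull $f$ back to a $[0,1]$-valued function on the ultraproduct, write that pullback as $\lim_\omega g_p$ for $[0,1]$-valued $g_p$ on $\mathbb{Z}_p$, replace each $g_p$ by a $\{0,1\}$-valued function that is $U_2$-close, and use the Fourier $\sigma$-algebra together with Corollary \ref{charconv2} to see the $d$-limit is unchanged. Two of your detours are unnecessary, though not wrong: the paper never reduces to finite-dimensional torus quotients (it embeds all of $\hat B$ at once, using that $\prod_\omega\mathbb{Z}_p$ is a $\mathbb{Q}$-vector space of uncountable dimension, so every countable torsion-free abelian group embeds into $\hat{\bA}$), and no complexity-$1$ counting (Proposition \ref{counting}) is needed here --- the only fact used is that $\|g-g'\|_{U_2}=0$ forces $\mathbb{E}(g'|\mathcal{F}(\bA))=\mathbb{E}(g|\mathcal{F}(\bA))=g$, hence $\mathcal{Q}(g')=\mathcal{Q}(g)$.

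There are two genuine gaps. First, the discretization is the one quantitative lemma in the whole proof and you assert it rather than prove it. You need: for every $[0,1]$-valued $g_p$ on $\mathbb{Z}_p$ there is $S_p\subseteq\mathbb{Z}_p$ with $\|1_{S_p}-g_p\|_{U_2}$ small uniformly as $p\to\infty$. Deterministic thresholding does not do this (the indicator of a superlevel set of $g_p$ has completely different Fourier coefficients from $g_p$; and ``thresholding'' the complex-valued trigonometric polynomial $\sum_j\hat{f'}(e_j)\chi_{j,p}$ does not even parse). The correct mechanism is independent random rounding with $\mathbb{P}(x\in S_p)=g_p(x)$, plus a Chernoff bound applied simultaneously to all $p$ characters of $\mathbb{Z}_p$ to get $\|\widehat{1_{S_p}}-\widehat{g_p}\|_\infty$ small, and then Lemma \ref{u2eq} to convert this to $U_2$-smallness; this is exactly the paper's Lemma \ref{discretize}, and it is a short probabilistic argument, not a dense-model or inverse theorem. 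Second, your ultraproduct construction only produces sets with $\lim_\omega d(1_{S_p},f)=0$, i.e.\ convergence along an $\omega$-large set of primes, while the proposition requires $S_p$ for every prime with genuine convergence as $p\to\infty$. The paper closes this by contradiction: if $\inf_{S\subseteq\mathbb{Z}_p} d(1_S,f)>\epsilon$ along some infinite sequence of primes, run the ultraproduct argument over that sequence to produce sets violating the assumption. You need that wrapper (or a fully quantitative version of your equidistributed-tuple construction); as written, the final conclusion does not follow from what you have built.
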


\begin{lemma}\label{discretize} For every $\epsilon$ there is $N(\epsilon)$ such that if $A$ is a finite abelian group with $|A|\geq N(\epsilon)$ and $f:A\rightarrow [0,1]$ is a function then there is a function $h:A\rightarrow\{0,1\}$ such that $\|f-h\|_{U_2}\leq\epsilon$. 
\end{lemma}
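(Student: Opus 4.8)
The statement asserts a quantitative "derandomization": any $[0,1]$-valued function $f$ on a large enough finite abelian group $A$ can be replaced by a genuine $\{0,1\}$-valued function $h$ that is $U_2$-close to it, with $N(\epsilon)$ depending only on $\epsilon$. The natural approach is the probabilistic one: define $h$ by independent rounding, i.e. for each $x\in A$ let $h(x)=1$ with probability $f(x)$ and $h(x)=0$ otherwise, all choices mutually independent. Then $\mathbb{E}(h(x))=f(x)$, so $g:=h-f$ is a mean-zero random function with independent, bounded ($|g(x)|\le 1$) coordinates. By Lemma \ref{u2eq} we have $\|g\|_{U_2}=\|\hat g\|_4$, so it suffices to show that with positive probability $\|\hat g\|_4\le\epsilon$, equivalently $\sum_{\chi\in\hat A}|\hat g(\chi)|^4\le\epsilon^4$. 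First I would compute $\mathbb{E}\,|\hat g(\chi)|^2=\frac{1}{|A|^2}\sum_x \mathbb{E}|g(x)|^2\le \frac{1}{|A|^2}\cdot|A|=\frac{1}{|A|}$ using independence and mean-zero; hence $\mathbb{E}\,\|\hat g\|_2^2\le 1$, i.e. $\mathbb{E}\sum_\chi|\hat g(\chi)|^2\le 1$. This already controls the sum of squares, but we need the sum of fourth powers, which is smaller only if no single Fourier coefficient is large.

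The key step is therefore an $L^\infty$ bound on $\hat g$ holding with high probability: I would show that $\max_\chi|\hat g(\chi)|$ is of order $\sqrt{(\log|A|)/|A|}$ with probability close to $1$. For a fixed character $\chi$, $\hat g(\chi)=\frac{1}{|A|}\sum_x g(x)\overline{\chi(x)}$ is an average of $|A|$ independent mean-zero complex random variables each of modulus at most $1$, so Hoeffding's inequality (applied to real and imaginary parts) gives $\mathbb{P}(|\hat g(\chi)|\ge t)\le 4\exp(-c t^2 |A|)$ for an absolute constant $c>0$. Taking a union bound over the $|A|$ characters and choosing $t=t_0:=C\sqrt{(\log|A|)/|A|}$ with $C$ large enough absolute constant makes $\mathbb{P}(\max_\chi|\hat g(\chi)|\ge t_0)\le 1/4$, say. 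Simultaneously, by Markov applied to $\mathbb{E}\,\|\hat g\|_2^2\le 1$, we get $\|\hat g\|_2^2\le 4$ with probability at least $3/4$. Intersecting the two events, there is a choice of $h$ with $\|\hat g\|_\infty\le t_0$ and $\|\hat g\|_2^2\le 4$ simultaneously. Then
\[
\|\hat g\|_4^4=\sum_\chi|\hat g(\chi)|^4\le \|\hat g\|_\infty^2\sum_\chi|\hat g(\chi)|^2\le t_0^2\cdot 4 = \frac{4C^2\log|A|}{|A|},
\]
which tends to $0$ as $|A|\to\infty$. Hence there is $N(\epsilon)$, depending only on $\epsilon$ (via the inequality $4C^2(\log N)/N\le\epsilon^4$), such that $|A|\ge N(\epsilon)$ forces the existence of $h:A\to\{0,1\}$ with $\|f-h\|_{U_2}=\|g\|_{U_2}\le\epsilon$.

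I expect the main obstacle to be purely a matter of organizing the concentration estimate cleanly: one must be slightly careful that $g$ is complex-valued (handle real and imaginary parts separately, or use a complex Hoeffding/Azuma bound), and that the union bound over $|\hat A|=|A|$ characters is the place where the $\log|A|$ enters — this is exactly why $N(\epsilon)$ can be taken uniform in the group. An alternative to the explicit Chernoff/union-bound route would be to bound $\mathbb{E}\,\|\hat g\|_4^4$ directly by expanding $\sum_\chi|\hat g(\chi)|^4$ as an expectation over quadruples $(x_1,x_2,x_3,x_4)$ with $x_1-x_2+x_3-x_4=0$ and using independence to kill all terms except those where the $x_i$ pair up; this gives $\mathbb{E}\,\|\hat g\|_4^4=O(1/|A|)$ at once, and then a single application of Markov produces the desired $h$ with probability at least $1/2$ whenever $1/|A|$ is small compared to $\epsilon^4$. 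This moment computation is the slicker route and I would present it as the main argument, mentioning the concentration approach only if a high-probability statement were needed elsewhere.
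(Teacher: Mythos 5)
Your proposal is correct and follows essentially the same route as the paper: independent random rounding with $\mathbb{E}(h(a))=f(a)$, a Chernoff/Hoeffding bound on each Fourier coefficient of $h-f$ with a union bound over the $|A|$ characters, and then the conversion of the resulting $\ell^\infty$ bound on $\widehat{h-f}$ into a $U_2$ bound via Lemma \ref{u2eq} (your inequality $\|\hat g\|_4^4\le\|\hat g\|_\infty^2\|\hat g\|_2^2$ is exactly the estimate $\|g\|_{U_2}\le(\|g\|_2\|\hat g\|_\infty)^{1/2}$ used there). The only cosmetic difference is that the paper fixes the threshold at $\epsilon^2$ rather than $C\sqrt{(\log|A|)/|A|}$, which slightly simplifies the union bound.
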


\begin{proof}  Let us fix $\epsilon>0$. Let $f:A\rightarrow [0,1]$ be a function on a finite abelian group. Let $h$ be the random function on $A$ whose distribution is uniquely determined by the following properties: 1.) $h$ is $\{0,1\}$-valued, 2.) $\{h(a)~|~a\in A\}$ is an independent system of random variables and 3.) $\mathbb{E}(h(a))=f(a)$ holds for every $a\in A$. We claim that with a large probability the function $h-f$ has $U_2$ norm at most $\epsilon$ if $|A|$ is big enough. Obsereve that $X_a:=h(a)-f(a)$ is a random variable for each $a\in A$ with $0$ expectation and $\|X_a\|_\infty\leq 1$. The random variables $X_a$ are all independent. Let $\chi:A\rightarrow\mathbb{C}$ be a linear character. Then we have that $(h-f,\chi)=|A|^{-1}\sum_{a\in A} X_a\chi(a)$.  By Chernoff's bound we have that $\mathbb{P}(|(h-f,\chi)|\geq\epsilon^2)$ is exponentially small in $|A|$. This implies that if $|A|$ is large enough then with probability close to $1$ we have that $\|\hat{h}-\hat{g}\|_\infty\leq\epsilon^2$ and thus by lemma  \ref{u2eq} we get $\|h-g\|_{U_2}\leq\epsilon$ holds in these cases.  
\end{proof}

\medskip

\noindent{\it Proof of proposition \ref{visszahuz}.}~~ For a number $n$ let $a(n)$ denote the minimum of $d(1_S,f)$ where $S$ is a subset in $\mathbb{Z}_n$. The statement of the proposition is equivalent with $\lim_{p\to\infty} a(n)=0$ where $p$ runs through the prime numbers. Assume by contradiction that there is $\epsilon>0$ and a growing infinite sequence $\{p_i\}_{i=1}^\infty$ of prime numbers with $a(p_i)>\epsilon$. Let $A_i=\mathbb{Z}_{p_i}$ and $\bA=\prod_\omega A_i$. We have that $\hat{\bA}=\prod_\omega\hat{A_i}\simeq\prod_\omega A_i=\bA$. Since $\bA$ is not only an abelian group but a field of $0$ characteristic with uncountable many elements we have that $\bA$ (and thus $\hat{\bA}$) as an abelian group is isomorphic to an infinite direct sum of $\mathbb{Q}^+$. It follows that the torsion-free group $\hat{B}$ has an embedding $\hat{\phi}:\hat{B}\rightarrow\hat{\bA}$  into $\hat{\bA}$.
This embedding induces a continuous homomorphsim $\phi:\bA\rightarrow B$ in the way that $\phi(x)$ denotes the unique element in $B$ such that $\chi(\phi(x))=\hat{\phi}(\chi)(x)$ holds for every $\chi\in\hat{B}$.

Let $g=f\circ\phi$. We have that $g:\bA\rightarrow [0,1]$ is a measurable function and thus $g=\lim_\omega g_i$ for a system of functions $\{g_i:A_i\rightarrow [0,1]\}_{i=1}^\infty$. By lemma \ref{discretize} for every $i$ we can find a $0-1$ valued function $g'_i$ such that $\lim_{i\to\infty}\|g'_i-g_i\|_{U_2}=0$. By choosing a subsequence we can assume that both $\{g_i'\}_{i=1}^\infty$ and $\{g_i\}_{i=1}^\infty$ are $d$-convergent. Let $g'=\lim_\omega g_i'$. We have that $\|g-g'\|_{U_2}=0$ and thus since $g$ is measurable in $\mathcal{F}(\bA)$ we have that $g=\mathbb{E}(g'|\mathcal{F}(\bA))$. By corollary \ref{charconv2} we obtain that the $d$ limit of $\{g_i'\}_{i=1}^\infty$ is $f$. This implies that $0=\lim d(g_i',f)\geq \liminf a(p_i)\geq\epsilon$ which is a contradiction.

\medskip

Now we are ready to prove theorem \ref{rothext}. First observe that in Proposition \ref{visszahuz} we can assume with no additional cost that the sets $S_p$ have density at least $\mathbb{E}(f)$. This follows from the fact that their densities converge to $\mathbb{E}(f)$ and so it is enough to set a few values to $1$ (with density tending to $0$).
This observation together with Proposition \ref{visszahuz} and theorem \ref{confdens} imply that if $f:A\rightarrow [0,1]$ is a measurable function with $\mathbb{E}(f)=\delta$ on an abelian group with torsion-free dual then  $\rho(\delta,\mathcal{L})\leq t(\mathcal{L},f)$. It remains to find a function where equality holds. For every $p$ prime let $S_p\subseteq\mathbb{Z}_p$ be such that $|S_p|/p\geq\delta$ and that $t(\mathcal{L},1_{S_p})$ is minimal possible.
We can choose a $d$-convergent subsequence $\{f_i\}_{i=1}^\infty$ from $1_{S_p}$ such that $\lim_{i\to\infty} t(\mathcal{L},f_i)=\rho(\delta,\mathcal{L})$. Let $f$ be the limit of $\{f_i\}_{i=1}^\infty$. By theorem \ref{confdens} we have that $t(\mathcal{L},f)=\lim_{i\to\infty} t(\mathcal{L},f_i)=\rho(\delta,\mathcal{L})$. Corollary \ref{charconv3} guarantess that $f$ is defined on a group whose dual is torsion-free.

\section{Connection to dense graph limit theory and concluding remarks}\label{graphlim}

Let $H$ and $G$ be finite graphs. The density of $H$ in $G$ is the probability that a random map from $V(H)$ to $V(G)$ takes edges to edges. We denote this quantity by $t(H,G)$. One can generalize this notion of density for the case when $G$ is replaced by a symmetric bounded measurable function $W:\Omega^2\rightarrow\mathbb{C}$ where $(\Omega,\mu)$ is a probability space. Then $t(H,W)$ is defined by $$t(H,W):=\int_{x_1,x_2,\dots,x_n\in\Omega}~\prod_{(i,j)\in E(H)}W(x_i,x_j)~~d\mu^n$$ where the verices of $H$ are indexed by $\{1,2,\dots,n\}$. It is easy to check that if $\Omega=V(G)$ , $\mu$ is the uniform distribution on $V(G)$ and $W:V(G)^2\rightarrow\{0,1\}$ is the adjacency matrix of $G$ then $t(H,G)=t(H,W)$. 

In the framework of dense graph limit theory, a sequence of graphs $\{G_i\}_{i=1}^\infty$ is called convergent if for every fixed graph $H$ the sequence $\{t(H,G_i)\}_{i=1}^\infty$ is convergent. It was proved in \cite{LSz} that for a convergent graph sequence $\{G_i\}_{i=1}^\infty$ there is a limit object of the form of a symmetric measurable function $W:\Omega^2\rightarrow [0,1]$ (called a {\it graphon}) such that for every graph $H$ we have $\lim_{i\to\infty} t(H,G_i)=t(H,W)$. 

In the above theorem $\Omega$ can be chosen to be $[0,1]$ with the uniform measure however in many cases it is more natural to use other probability spaces. We investigate the case when $(\Omega,\mu)$ is a compact abelian group $A$ with the normalized Haar measure. Let $f:A\rightarrow\mathbb{C}$ be a bounded measurable function and let $W_f:A^2\rightarrow\mathbb{C}$ be defined by $W_f(x,y):=f(x+y)$. As it was pointed out in the introduction, for a finite graph $H$ the density $t(H,W_f)$ is equal to $t(\mathcal{L},f)$ where $\mathcal{L}_H:=\{x_i+x_j~:~(i,j)\in E(H)\}$. Using this correspondence and our results in this paper we get the following theorem on graph limits.

\begin{theorem}\label{tgraphlim} Let $\{f_i:A_i\rightarrow K\}_{i=1}^\infty$ be a sequence of measurable functions on compact abelian groups with values in a compact convex set $K\subseteq\mathbb{C}$. Assume that $\lim_{i\to\infty}t(H,W_{f_i})$ exists for every graph $H$. Then there is a measurable function $f:A\rightarrow K$ on a compact abelian group $A$ such that $\lim_{i\to\infty} t(H,W_{f_i})=t(H,W_f)$ holds for every graph $H$.
\end{theorem}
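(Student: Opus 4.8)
The plan is to transport the problem into the language of linear configurations through the identity $t(H,W_g)=t(\mathcal{L}_H,g)$, to use the compactness of $(\mathcal{H}(K),d)$ from Theorem \ref{convcl} to produce the limit object, and then to invoke the density continuity of Theorem \ref{denscont}.

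First I would fix $a>0$ with $|z|\le a$ for all $z\in K$, which exists since $K$ is compact; then every $f_i$ lies in $\mathcal{H}(K)$ and has $L^2$-norm at most $a$. By Theorem \ref{convcl} the metric space $(\mathcal{H}(K),d)$ is compact, so some subsequence $\{f_{i_k}\}_{k=1}^\infty$ converges in $d$ to a class $f\in\mathcal{H}(K)$, which by the definition of $\mathcal{H}(K)$ is represented by a Borel measurable function $f\colon A\to K$ on a (second countable) compact abelian group $A$. This $f$ will be the limit object, and it already takes its values in $K$ as required.

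Now recall that for a finite graph $H$ on $\{1,\dots,n\}$ and any bounded measurable $g$ on a compact abelian group one has $t(H,W_g)=t(\mathcal{L}_H,g)$ with $\mathcal{L}_H=\{x_i+x_j:(i,j)\in E(H)\}$, and that $\mathcal{L}_H$ has true complexity at most $1$ in the class $\mathcal{A}$ of all compact abelian groups (every coefficient equals $1$, so the number-theoretic degeneracies that the class condition is meant to exclude cannot occur). Applying Theorem \ref{denscont} with $\mathcal{L}=\mathcal{L}_H$, this $\mathcal{A}$, and the uniform bound $a$, the functional $g\mapsto t(\mathcal{L}_H,g)$ is continuous in $d$, hence
\[
t(H,W_{f_{i_k}})=t(\mathcal{L}_H,f_{i_k})\ \longrightarrow\ t(\mathcal{L}_H,f)=t(H,W_f)\qquad(k\to\infty)
\]
for every $H$. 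Since $\lim_{i\to\infty}t(H,W_{f_i})$ exists by hypothesis, it must coincide with the limit along the subsequence, namely $t(H,W_f)$; as this holds for all $H$, the theorem follows. (One can also skip the subsequence and argue directly on $\bA=\prod_\omega A_i$, taking $f=\mathcal{Q}(\lim_\omega f_i)$ and using Proposition \ref{counting} in place of Theorem \ref{denscont}; this is the same argument with the black box opened up, and the convexity of $K$ again guarantees that $\mathcal{Q}(\lim_\omega f_i)$ is $K$-valued since $\mathbb{E}(\cdot\mid\mathcal{F}(\bA))$ of a $K$-valued function stays $K$-valued.)

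The only step that needs genuine thought rather than bookkeeping is the claim that $\mathcal{L}_H$ has true complexity at most $1$ uniformly over all compact abelian groups, so that Theorem \ref{denscont} is available with the maximal admissible class. This follows from the Cauchy--Schwarz complexity bound of Green and Tao together with the observation that, for any edge $e=(u,v)$, the remaining forms of $\mathcal{L}_H$ split into those incident to $u$ and those not incident to $u$, and $x_u+x_v$ lies in the $\mathbb{Q}$-span of neither class; this gives Cauchy--Schwarz complexity at most $1$ at every form, uniformly in the ambient group. Everything else is a direct combination of Theorem \ref{convcl}, Theorem \ref{denscont}, and the $W_f\leftrightarrow\mathcal{L}_H$ correspondence recalled above.
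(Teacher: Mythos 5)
Your proposal is correct and follows the same route as the paper: extract a $d$-convergent subsequence with $K$-valued limit via the compactness of $(\mathcal{H}(K),d)$ from Theorem \ref{convcl}, then apply the continuity of $g\mapsto t(\mathcal{L}_H,g)$ from Theorem \ref{denscont} together with the identity $t(H,W_g)=t(\mathcal{L}_H,g)$. The extra material you supply (the Cauchy--Schwarz justification that $\mathcal{L}_H$ has true complexity at most $1$, which the paper only asserts in the introduction, and the explicit subsequence-to-full-sequence step) fills in details the paper leaves implicit but does not change the argument.
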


\begin{proof} By chosing a subsequence we can assume by theorem \ref{convcl} that $\{f_i\}_{i=1}^\infty$ is convergent in $d$ with limit $f:A\rightarrow K$. Then by theorem \ref{confdens} we obtain that $\lim_{i\to\infty}t(\mathcal{L}_H,f_i)=t(\mathcal{L}_H,f)$ holds for every graph $H$. This completes the proof.
\end{proof}

Theorem \ref{tgraphlim} is closely related to the results in \cite{LSz3}. Let $f:G\rightarrow [0,1]$ be a mesurable function on a compact but not necessarily commutative group. Assume that the technical condition $f(g)=f(g^{-1})$ holds for every $g\in G$. Then the function $W:G^2\rightarrow [0,1]$ defined by $W(x,y)=f(xy^{-1})$ is symmetric. We call graphons of this type Cayley grphons. It was proved in \cite{LSz3} that limits of Cayley graphons are also Cayley graphons. 
This theorem implies that one can talk about limits of functions on compact topological groups and the limit object are also functions on compact topological groups.
More complicated limit objects come into picture if in the commutative setting when we wish for the continuity of densities of linear configurations of higher complexity. As it was showed in \cite{Sz}, this refinement of the limit concept requires more complicated limit objects. There are examples for functions on abelian groups converging to functions on nilmanifolds.

\medskip

\noindent{\bf Acknowledgment}~~  {\it This research was supported by the European Research Council, project: Limits of discrete structures, 617747}

\end{document}